\numberwithin{equation}{section}
\newtheorem{theorem}{Theorem}[section]
\newtheorem{lemma}[theorem]{Lemma}
\newtheorem{proposition}[theorem]{Proposition}
\newtheorem{corollary}[theorem]{Corollary}
\theoremstyle{definition}
\newtheorem{definition}[theorem]{Definition}
\newtheorem{assumption}[theorem]{Assumption}
\newtheorem{example}[theorem]{Example}
\theoremstyle{remark}
\newtheorem{remark}[theorem]{Remark}
\crefname{theorem}{Theorem}{Theorems}
\crefname{lemma}{Lemma}{Lemmas}
\crefname{section}{Section}{Sections}
\crefname{table}{Table}{Tables}
\crefname{assumption}{Assumption}{Assumptions}
\crefname{definition}{Definition}{Definitions}
\crefname{remark}{Remark}{Remarks}
\crefname{equation}{}{}
\newcommand{\cG}{\mathcal{G}}
\DeclareMathOperator{\tr}{tr}
\newcommand{\cE}{\mathcal{E}}
\newcommand{\LL}{\mathcal{L}}
\newcommand{\R}{\mathbb{R}}
\newcommand{\N}{\mathbb{N}}
\newcommand{\Prob}{\mathbb{P}}
\newcommand{\Reff}{R_{\mathrm{eff}}}
\begin{document}

\title{Sharp Spectral Zeta Asymptotics on Graphs of Quadratic Growth}

\emsauthor{1}{
  \givenname{Da}
  \surname{Xu}
  \mrid{}
  \orcid{}}{D.~Xu}

\Emsaffil{1}{
  \department{}
  \organisation{China Mobile Research Institute}
  \rorid{}
  \address{}
  \zip{}
  \city{Beijing}
  \country{P.R.~China}
  \affemail{xudayj@chinamobile.com}}

\classification{58J50, 31C20, 60J10}[05C50, 39A12, 05C81]
\keywords{Spectral zeta function, graph Laplacian, heat kernel asymptotics, quadratic volume growth, Poincaré inequality, intrinsic ultracontractivity, random walk homogenisation, Kirchhoff index}

\begin{abstract}
We investigate the spectral properties of the Dirichlet Laplacian on large finite metric balls within \emph{irregular} infinite graphs of quadratic volume growth.
We consider an exhaustion $G_n=B_{R_n}(x_0)$ and the spectral zeta value
$Z_n(1)=\tr(\LL_n^{-1})$ of the killed generator $\LL_n$.

We establish a sharp asymptotic law under the assumptions that the graph satisfies uniform quadratic volume growth (VG(2)) and a Poincaré inequality (PI). These analytic–geometric hypotheses imply large-scale regularity. Additionally, we assume a standard quantitative homogenisation property: a uniform local central limit theorem with a polynomial convergence rate. This hypothesis holds for our main example classes and implies the existence of a global heat-kernel constant $\cG>0$ (independent of $x$). In particular, the lazy simple random walk (LSRW) satisfies 
\[
  p_t(x,x)\;\sim\;\frac{\cG}{t}\quad(t\to\infty).
\]

Our main theorem establishes the sharp asymptotic
\[
  Z_n(1)\;=\;\cG\,N_n\log N_n\;+\; O(N_n),\qquad N_n:=|V(G_n)|\xrightarrow[n\to\infty]{}\infty.
\]
This implies a relative error of $O(1/\log N_n)$, with constants depending only on the structural parameters of $G$.
This result extends far beyond homogeneous lattices.
For $\mathbb Z^2$, this yields the constant identification $\cG = 2/\pi$, providing a new limit formula that recovers $\pi$ without $\pi$ appearing in the input (a ``$\pi$-free'' limit).
Our techniques highlight the robustness of spectral asymptotics under homogenisation in this critical, recurrent setting.
\end{abstract}

\maketitle
\section{Introduction}

The relationship between the geometry of a space and the spectrum of its associated Laplacian is a fundamental area of study. In Riemannian geometry, Weyl's law and the Minakshisundaram--Pleijel heat trace expansion provide deep connections between volume, curvature, and eigenvalue asymptotics. In the discrete setting, analogous investigations explore how the combinatorial and geometric structure of a graph influences the spectrum of the graph Laplacian (see \cite{Chung97}).

This paper focuses on the spectral zeta function on graphs. For a finite graph $H$, we consider the Laplacian associated with the lazy simple random walk (see \Cref{sec:prelim}). With Laplacian eigenvalues $0 < \lambda_1 \leq \lambda_2 \leq \cdots$, the spectral zeta function is defined as $\zeta_H(s) = \sum_k \lambda_k^{-s}$. We are interested in the value at $s=1$, $Z_H(1) := \zeta_H(1)$, which corresponds to the trace of the inverse Laplacian (the Green operator).

\begin{remark}[Connection to Kirchhoff Index]\label{rem:kirchhoff}
The value $Z_H(1) = \sum_k \lambda_k^{-1}$ is closely related to the Kirchhoff index $K(H)$, which is the sum of effective resistances $\Reff(x,y)$ between all pairs of vertices \cite{KleinRandic93}. For the standard (non-Dirichlet) Laplacian on a connected graph, there are exact identities relating the trace of the pseudoinverse to $K(H)$ (see \cite[Chapter 9]{LyonsPeres16}).
\end{remark}

\noindent\textit{Dirichlet convention.} Under the Dirichlet setup used in this paper (killing at the boundary of a finite domain),
we recall explicitly that
$Z_H(1)=\sum_k \lambda_k(\LL_H)^{-1}=\sum_{v\in H} G_H(v,v)$; see also Remark~\ref{rem:kirchhoff-precise}.

\begin{remark}[Kirchhoff index: explicit identities under our conventions]\label{rem:kirchhoff-precise}
Let $H$ be a finite connected graph (no killing). Write $D$ for the degree matrix, $A$ for the adjacency, $L=D-A$ (combinatorial Laplacian), and $L_{\mathrm{rw}}=I-D^{-1}A$ (random-walk Laplacian on $\ell^2(|\cdot|)$). Then, with $0=\lambda_1(L_{\mathrm{rw}})<\lambda_2\le\cdots\le\lambda_n$,
\[
  K(H)=\sum_{x,y}\Reff(x,y)=\mathrm{vol}(H)\sum_{k=2}^n \lambda_k(L_{\mathrm{rw}})^{-1},
\]
where $\mathrm{vol}(H)=\sum_{x}\deg(x)$; equivalently $K(H)=n\sum_{k=2}^n \mu_k(L)^{-1}$ in terms of the nonzero eigenvalues $\mu_k(L)$ of $L$; see \cite[Thm.~9.23]{LyonsPeres16}. For the Dirichlet operator $\LL_H$ used in this paper (killing at $\partial H$), we have $Z_H(1)=\sum_k\lambda_k(\LL_H)^{-1}=\sum_{v\in H}G_H(v,v)$; this is the natural Dirichlet analogue of the global Kirchhoff identity. Thanks to bounded degree, traces computed in $\ell^2(m)$ or $\ell^2(|\cdot|)$ agree by similarity, cf.\ \Cref{rem:measures_operators}.
\end{remark}

We investigate the asymptotic behaviour of $Z_n(1)$ for large finite subgraphs $G_n$ exhausting an infinite graph $G$. We focus on the critical case of \emph{quadratic volume growth} ($|B_R| \asymp R^2$), corresponding to an effective dimension of $d=2$. This dimension is critical, leading to distinct asymptotic behaviour compared to other dimensions, as summarized in \Cref{tab:growth}. This class includes the lattice $\mathbb{Z}^2$ but also encompasses irregular structures that behave two-dimensionally on a large scale, such as lattices with random bounded conductances or certain planar graphs with uniform properties.

\subsection{Main Result and Assumptions}
We establish a precise asymptotic formula for $Z_n(1)$. Our foundational assumptions on the graph $G$ are uniform quadratic volume growth (VG(2)) and a Poincaré inequality (PI). These ensure strong large-scale geometric regularity.

To achieve a sharp first-order asymptotic involving a global constant, we must ensure that the random walk on $G$ homogenises uniformly to a Brownian motion. While VG(2) and PI imply qualitative homogenisation, we explicitly assume a quantitative version: a uniform Local Central Limit Theorem (LCLT) with a polynomial rate of convergence (see \Cref{ass:QH}). For convenience we collect the standing assumptions and principal notation in the following paragraph; this centralises symbols used repeatedly later.

\paragraph{Notation and Standing Assumptions.}
\begin{enumerate}[label={}]
  \item \textbf{Graph/metric.} $G=(V,E)$ is infinite, connected, with bounded degree. Closed balls: $B_R(x)=\{y:d(x,y)\le R\}$ and $V(x,R)=|B_R(x)|$.
  \item \textbf{Dirichlet objects on a finite domain $H$.} For $H\subset V$ finite, $p_t^{H}(x,y)$ is the Dirichlet heat kernel (LSRW killed on leaving $H$); $G_H(x,y)=\sum_{t\ge0} p_t^{H}(x,y)$; $Z_H(1)=\sum_{x\in H} G_H(x,x)=\tr(\LL_H^{-1})$; $(\lambda_k(H))_{k\ge1}$ are the (Dirichlet) eigenvalues of $\LL_H$; $\phi_1^{H}$ is an $L^2$-normalised ground state (w.r.t.\ counting measure).
  \item \textbf{Exhaustion/scale.} For $G_n=B_{R_n}(x_0)$ write $N_n=|G_n|$. Fix $\eta\in(0,1/2)$ and set the interior time scale $T_n:=R_n^{2(1-2\eta)}$.
  \item \textbf{Structural hypotheses.} (VG(2)) $V(x,R)\asymp R^2$; (PI) scale-invariant Poincaré; (CDC) holds on balls (automatic under VD+PI); (QH) uniform on-diagonal LCLT $p_t(x,x)=\cG t^{-1}+O(t^{-1-\delta})$ for some $\cG>0,\delta>0$.
  \item \textbf{Consequences used repeatedly.} From VD+PI we obtain PHI, Gaussian bounds, exit/maximal estimates, Faber–Krahn, boundary Harnack, ground-state $L^\infty$ bounds, and Intrinsic Ultracontractivity (IU) on balls (see \Cref{rem:chain}).
  \item \textbf{Conventions.} Symbols $C,c,c_i$ depend only on $(\mathrm{VG(2)},\mathrm{PI},\Delta,\mathrm{QH})$ (and chosen $\eta$). We write $A\lesssim B$ for $A\le C B$, and $A\asymp B$ if $A\lesssim B\lesssim A$. "$\log$" denotes natural logarithm.
  \item \textbf{Acronyms.} VD $=$ volume doubling; PI $=$ Poincaré inequality; PHI $=$ parabolic Harnack inequality; CDC $=$ capacity density condition; IU $=$ intrinsic ultracontractivity; LSRW $=$ lazy simple random walk.
\end{enumerate}

\begin{definition}[Heat-kernel constant]\label{def:G}
Under the assumption of quantitative homogenisation, the LSRW on $G$ satisfies
\[
  p_t(x,x)=\frac{\cG}{t}+O(t^{-1-\delta})
\]
as $t\to\infty$, for constants $\cG>0$ and $\delta>0$ independent of $x$. We call $\cG$ the \emph{heat-kernel constant} of $G$.
\end{definition}

\begin{remark}[Explicit form of $\cG$]\label{rem:G_formula}
In many standard cases, such as random walks on $\mathbb{Z}^d$ driven by a mean-zero, finite-variance step distribution with covariance matrix $\Sigma$, the LCLT yields an explicit formula (see, e.g., \cite[Ch. 2]{LawlerLimic10}). In $d=2$:
\[
\cG = \frac{1}{2\pi\sqrt{\det\Sigma}}.
\]
For irregular media (like the RCM), $\Sigma$ represents the effective homogenized diffusivity.
\end{remark}

This assumption is standard and known to hold in many canonical examples of irregular media (see \Cref{rem:QH_validity}).
\paragraph{Standing Assumptions (Summary).} We assume: (VG(2)) quadratic volume growth; (PI) scale-invariant Poincaré inequality; bounded degree; and (QH) the uniform on-diagonal expansion $p_t(x,x)=\cG t^{-1}+O(t^{-1-\delta})$. These imply VD, PHI, Gaussian bounds, Faber--Krahn and intrinsic ultracontractivity (IU) on metric balls. (QH) provides a summable fluctuation ensuring an $O(N_n)$ remainder rather than $O(N_n\log\log N_n)$.

\begin{theorem}[Main asymptotic with sharp remainder]\label{thm:main}
Let $G$ be an infinite, connected, bounded degree graph satisfying (VG(2)) and (PI), and let $\{G_n\}$ be an exhaustion by metric balls $G_n=B_{R_n}(x_0)$ with $N_n=|G_n|\to\infty$. Assume (QH) with heat-kernel constant $\cG>0$ and exponent $\delta>0$. (For balls, CDC and IU required later follow from VD+PI; see \Cref{lem:CDC-balls,lem:IU_derivation}.) Then
\[
  Z_n(1)= \cG N_n \log N_n + O(N_n),
\]
where the implicit constant depends only on the data $(\mathrm{VG(2)},\mathrm{PI},\Delta,\mathrm{QH})$ and on the (freely chosen) boundary layer exponent $\eta\in(0,1/2)$ used in the proof, but not on $n$. In particular the relative error is $O(1/\log N_n)$. If $G$ and $G'$ are quasi-isometric graphs both satisfying the hypotheses with the \emph{same} on-diagonal constant $\cG$, then the statement transfers with the same leading constant and uniform $O(N_n)$ control.
\end{theorem}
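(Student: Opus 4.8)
The plan is to pass from the spectral value to a sum of on–diagonal heat kernels, isolate an infinite–volume contribution carrying the $\cG N_n\log N_n$ term, and absorb every correction into $O(N_n)$. Write $P^{G_n}$ for the killed (sub‑Markov) transition operator of the LSRW on $G_n$, so that $\LL_n=I-P^{G_n}$ and
\[
  Z_n(1)=\tr(\LL_n^{-1})=\sum_{t\ge0}\tr\big((P^{G_n})^{t}\big)=\sum_{t\ge0}\sum_{x\in G_n}p_t^{G_n}(x,x).
\]
For each $t$ I would write $\sum_{x}p_t^{G_n}(x,x)=\sum_{x}p_t(x,x)-D_t$, where $p_t$ is the full‑graph heat kernel and $D_t:=\sum_{x\in G_n}\big(p_t(x,x)-p_t^{G_n}(x,x)\big)\ge0$ is the boundary deficit, then fix a cut‑off $S_n:=A\,R_n^{2}$ with $A$ a large constant (comparable to the inverse spectral gap; one may also split below the interior scale $T_n=R_n^{2(1-2\eta)}$, where the deficit is exponentially small off a layer of width $\asymp R_n^{1-2\eta}$, but this is optional bookkeeping). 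It then remains to show: (i) $\sum_{t\le S_n}\sum_x p_t(x,x)=\cG N_n\log N_n+O(N_n)$; (ii) $\sum_{t\le S_n}D_t=O(N_n)$; (iii) $\sum_{t> S_n}\tr((P^{G_n})^{t})=O(N_n)$.

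Steps (i) and (iii) are the routine ones. For (i) I would use (QH): $p_t(x,x)=\cG t^{-1}+O(t^{-1-\delta})$ uniformly for $t\ge1$, and $p_0(x,x)=1$, so the sum equals $\cG N_n\sum_{t=1}^{S_n}t^{-1}+O\big(N_n\sum_{t\ge1}t^{-1-\delta}\big)+N_n=\cG N_n\log S_n+O(N_n)=\cG N_n\log N_n+O(N_n)$, since $N_n\asymp R_n^{2}\asymp S_n$ makes $\log S_n=\log N_n+O(1)$; the summability of $O(t^{-1-\delta})$ is exactly what yields $O(N_n)$ rather than $O(N_n\log\log N_n)$. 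For (iii): $\tr((P^{G_n})^{t})=\sum_k(1-\lambda_k)^{t}\le(1-\lambda_1)^{t-S_n}\tr((P^{G_n})^{S_n})$ for $t\ge S_n$; the Gaussian on‑diagonal bound $p_{S_n}(x,x)\lesssim V(x,\sqrt{S_n})^{-1}\asymp S_n^{-1}$ (no large‑scale saturation, as $G$ is infinite with $V(\cdot,r)\asymp r^{2}$) gives $\tr((P^{G_n})^{S_n})\lesssim N_n/S_n\asymp1$, and Faber--Krahn on balls (\Cref{rem:chain}) gives $\lambda_1\gtrsim R_n^{-2}$, whence the tail is $\lesssim\lambda_1^{-1}\lesssim R_n^{2}\asymp N_n$. (Intrinsic ultracontractivity, \Cref{lem:IU_derivation}, would refine the spectral picture but is not needed for an $O(N_n)$ bound.)

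The hard part will be Step (ii). I would use the last‑exit decomposition $p_t(x,x)-p_t^{G_n}(x,x)=\E_x\big[\mathbf 1_{\{\tau\le t\}}\,p_{t-\tau}(X_\tau,x)\big]$, where $\tau$ is the exit time of $G_n$ (so $X_\tau\notin G_n$); summing over $t\le S_n$ and reindexing gives $\sum_{t\le S_n}D_t\le\sum_{x\in G_n}\E_x\big[G^{(S_n)}(X_\tau,x)\big]$ with $G^{(S)}(y,z):=\sum_{u=0}^{S}p_u(y,z)$. Since $d(x,X_\tau)\ge\rho(x):=d(x,V\setminus G_n)$, and $p_u(y,z)\lesssim u^{-1}e^{-c\,d(y,z)^2/u}$ for $u\ge1$ with $p_0(y,z)=\delta_{yz}$, splitting the $u$‑sum at $u\asymp d(y,z)^{2}$ gives $G^{(S)}(y,z)\lesssim1+\log_+\!\big(S/d(y,z)^{2}\big)$, hence \emph{deterministically} $\E_x[G^{(S_n)}(X_\tau,x)]\lesssim1+\log\!\big(R_n/\rho(x)\big)$ (using $S_n\asymp R_n^{2}$, $1\le\rho(x)\le R_n$). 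Thus $\sum_{t\le S_n}D_t\lesssim N_n+\sum_{x\in G_n}\log(R_n/\rho(x))$, and the whole estimate rests on the metric bound $\sum_{x\in G_n}\log(R_n/\rho(x))=O(N_n)$. Writing it as $N_n\int_0^\infty\Prob\big(\rho(X)<R_n e^{-u}\big)\,du$ for $X$ uniform on $G_n$, I would invoke quadratic volume growth to see that the depth‑$w$ boundary layer is thin, $\#\{x\in G_n:\rho(x)<w\}\lesssim R_n w$, so $\Prob(\rho(X)<R_n e^{-u})\lesssim R_n\cdot R_n e^{-u}/N_n\asymp e^{-u}$ and the integral is $O(1)$.

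Assembling (i)--(iii) gives $Z_n(1)=\cG N_n\log N_n+O(N_n)$ with constants depending only on $(\mathrm{VG(2)},\mathrm{PI},\Delta,\mathrm{QH})$ and the chosen $\eta$; the quasi‑isometry clause is then immediate, since bounded degree, VD and PI (hence the whole chain of \Cref{rem:chain}) transfer under a quasi‑isometry with comparable constants, the ball volumes along corresponding exhaustions are comparable, and the common value of $\cG$ is assumed, so the established asymptotic applies verbatim to each graph with the same leading constant. I expect the one genuinely delicate point to be the geometric estimate $\sum_{x\in G_n}\log\big(R_n/d(x,V\setminus G_n)\big)=O(N_n)$: the per‑vertex deficit really is of order $\log(R_n/\rho(x))$ — a random walk exits a ball near its starting vertex only up to a Beurling‑type polynomial tail, so no termwise gain is available — and one must exploit that quadratic growth forces the boundary annuli of $G_n$ to carry only $O(R_n w)$ volume at depth $w$.
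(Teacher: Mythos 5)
Your proposal is correct and takes a genuinely different route from the paper. The paper's proof splits $G_n$ into an interior $I_n$ (buffer width $R_n^{1-\eta}$) and a thin boundary layer, obtains a \emph{lower} bound with the sub-optimal coefficient $\cG(1-2\eta)$ by neglecting boundary vertices and using the maximal inequality over the window $T_n=R_n^{2(1-2\eta)}$, obtains an \emph{upper} bound with coefficient $\cG$ via $p_t^{G_n}\le p_t$ for $t\le R_n^2$ plus Intrinsic Ultracontractivity for the tail, and then squeezes $\eta\downarrow0$. You instead work with the aggregated trace directly: the whole-graph on-diagonal sum carries the main term $\cG N_n\log N_n+O(N_n)$; the boundary deficit $D_t=\sum_x(p_t-p_t^{G_n})(x,x)$ is controlled in one stroke by the first-exit (not ``last-exit'', as you wrote) decomposition, the truncated Green-function estimate $G^{(S)}(y,z)\lesssim1+\log_+(S/d(y,z)^2)$, and an annular volume bound; and the tail $t>S_n$ is handled by the semigroup inequality $\tr((P^{G_n})^{t})\le(1-\lambda_1)^{t-S_n}\tr((P^{G_n})^{S_n})$ together with the Gaussian bound $\tr((P^{G_n})^{S_n})\lesssim N_n/S_n\asymp1$ and Faber--Krahn. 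Your route buys two real improvements: (a) it dispenses with Intrinsic Ultracontractivity entirely --- the paper's \Cref{rem:IU_necessity} argues IU is needed to avoid a $\log\log$ loss, but that is only so for a \emph{pointwise} bound on $G_{G_n}(v,v)$; your aggregated tail bound needs only FK and the full-graph on-diagonal Gaussian decay at time $S_n\asymp R_n^2$; and (b) it delivers the $O(N_n)$ remainder directly, with no $\eta$-squeeze. On the latter point it arguably closes a gap in the paper's own proof: for any \emph{fixed} $\eta$, \Cref{cor:lower} gives only $Z_n(1)\ge\cG(1-2\eta)N_n\log N_n-O(N_n)$, which differs from $\cG N_n\log N_n-O(N_n)$ by $2\eta\cG N_n\log N_n$ --- not $O(N_n)$ --- so the paper's closing sentence does not actually establish the stated remainder, whereas your Step (ii) does.

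The one place both arguments rest on the same slightly shaky ground is the volume of thin boundary annuli. You need $\#\{x\in G_n:d(x,V\setminus G_n)<w\}\lesssim R_n w$ uniformly in $w\ge1$, and the paper needs the same for the single scale $W_n=R_n^{1-\eta}$ in \Cref{lem:boundary_volume}. This does \emph{not} follow from the raw VG(2) algebra: the paper's computation $c_2R^2-c_1(R-W)^2$ leaves a $(c_2-c_1)R^2$ term that is not $\lesssim RW$ unless $c_1=c_2$. The correct ingredient is a Buckley-type annular decay $|B_R\setminus B_{R-w}|\lesssim(w/R)^{\delta}R^2$, which holds for connected, bounded-degree, doubling graphs for some $\delta\in(0,1]$. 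Conveniently, your argument is robust to the exponent: the integral $\int_0^\infty\Prob\big(\rho(X)<R_ne^{-u}\big)\,du\lesssim\int_0^\infty e^{-\delta u}\,du<\infty$ converges for any $\delta>0$, so even a weaker annular decay suffices. You rightly flag this as the delicate point; with it in place, your proof is sound and complete.
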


\paragraph{Proof sketch.} Fix $\eta\in(0,1/2)$ and decompose $G_n=I_n\cup E_n$ with interior $I_n=\{x: d(x,\partial G_n)>R_n^{1-\eta}\}$ and boundary layer $E_n$. For $x\in I_n$ the probability of exiting by time $T_n:=R_n^{2(1-2\eta)}$ is super-polynomially small (Gaussian / maximal inequality), so $p_t^{G_n}(x,x)=p_t(x,x)+o(t^{-1})$ uniformly for $t\le T_n$. Summing the quantitative homogenisation (QH) expansion $p_t(x,x)=\cG t^{-1}+O(t^{-1-\delta})$ up to $T_n$ produces $2\cG(1-2\eta)\log R_n+O(1)$ for $G_{G_n}(x,x)$. Summing over $|I_n|=N_n-O(N_n^{1-\eta/2})$ yields the lower bound with coefficient $(1-2\eta)$. For all $x$, the short-time sum to $R_n^2$ gives $2\cG\log R_n+O(1)$; for $t>R_n^2$, Intrinsic Ultracontractivity (IU) and Faber--Krahn give $p_t^{G_n}(x,x)\le C N_n^{-1} e^{-c t/R_n^2}$ so the tail contributes $O(1)$. Summing over $x$ gives the matching upper bound. Letting $\eta\downarrow0$ squeezes the coefficient to $\cG$ and the pointwise $O(1)$ bounds aggregate to an $O(N_n)$ remainder.

\begin{corollary}[Mesoscopic heat trace]\label{cor:heat_trace}
Under the assumptions of \Cref{thm:main}, for any $t=t_n$ with $1\ll t_n \ll R_n^2$,
\[
  \sum_{k\ge1} e^{-t \lambda_k(G_n)} = \cG N_n + O\Big(N_n t^{-\delta} + N_n \frac{t}{R_n^2}\Big).
\]
Choosing $t=R_n^{2\theta}$ with $0<\theta<1$ yields an error $O\big(N_n R_n^{-2\delta\theta} + N_n R_n^{2(\theta-1)}\big)$.
\end{corollary}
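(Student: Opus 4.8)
The plan is to deduce \Cref{cor:heat_trace} directly from the machinery assembled for \Cref{thm:main}, using the same interior/boundary decomposition but now tracking the heat trace $\sum_k e^{-t\lambda_k(G_n)} = \sum_{x\in G_n} p_t^{G_n}(x,x)$ instead of the Green-diagonal. First I would fix the boundary-layer exponent $\eta$ as in the proof of \Cref{thm:main} and split $G_n = I_n \cup E_n$. For $x\in I_n$ and $t = t_n \ll R_n^2$ (so in particular $t \le T_n$ for a suitable choice of $\eta$, or else handled directly by the Gaussian/exit estimate since $t \ll R_n^2$ and $x$ is far from $\partial G_n$), the maximal inequality gives $p_t^{G_n}(x,x) = p_t(x,x) + O\!\big(e^{-c R_n^{2\eta}}\big)$ (using $d(x,\partial G_n) > R_n^{1-\eta}$ and Gaussian upper bounds), and then (QH) yields $p_t^{G_n}(x,x) = \cG t^{-1} + O(t^{-1-\delta}) + O\!\big(e^{-cR_n^{2\eta}}\big)$. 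Multiplying by $|I_n| = N_n - O(N_n^{1-\eta/2})$ and by $t$ — wait, we want $\sum_x p_t^{G_n}(x,x)$, not $t\sum_x$ — gives $\sum_{x\in I_n} p_t^{G_n}(x,x) = \cG\,|I_n|\,t^{-1}\cdot t + \dots$; more carefully, $\sum_{x\in I_n} p_t^{G_n}(x,x) = \cG|I_n|/t \cdot$ no: each term is $\cG/t + O(t^{-1-\delta})$, so the sum is $\cG|I_n|/t + O(|I_n|t^{-1-\delta}) + O(N_n e^{-cR_n^{2\eta}})$. This does not look like $\cG N_n$. The correct reading: one wants $\sum_k e^{-t\lambda_k}$, and since $p_t^{G_n}(x,x)\sim \cG/t$ the naive sum is $\cG N_n/t$, so the stated corollary as written must intend a different normalisation — presumably it should read $t\sum_k e^{-t\lambda_k(G_n)} = \cG N_n + \dots$, or equivalently the left side is already the "time-integrated" quantity. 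I would therefore interpret and prove the corollary with the left-hand side understood as $t\sum_{k} e^{-t\lambda_k(G_n)}$ (equivalently $t\cdot\tr(e^{-t\LL_n}) = t\sum_{x} p_t^{G_n}(x,x)$), which is the only reading consistent with the right-hand side and with the heuristic $p_t^{G_n}(x,x)\approx \cG/t$ on the bulk.

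With that normalisation the interior contributes $t\sum_{x\in I_n} p_t^{G_n}(x,x) = \cG|I_n| + O(|I_n| t^{-\delta}) + O(N_n t\, e^{-cR_n^{2\eta}})$, and since $t \ll R_n^2$ the last term is negligible. Next I would bound the boundary layer $E_n$: here $|E_n| = O(N_n^{1-\eta/2}) = O(N_n R_n^{-\eta})$ by (VG(2)), and for every $x$ the on-diagonal Dirichlet kernel satisfies $p_t^{G_n}(x,x) \le p_t(x,x) \le C/t$ for $t\gtrsim 1$ by the ambient Gaussian/on-diagonal upper bound, so $t\sum_{x\in E_n} p_t^{G_n}(x,x) \le C|E_n| = O(N_n R_n^{-\eta})$. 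Since $R_n^{-\eta} \ll R_n^{2(\theta-1)}$ fails in general — we need to double-check: with $t = R_n^{2\theta}$ the claimed error already contains $N_n R_n^{2(\theta-1)}$; for $\eta$ small the boundary term $N_n R_n^{-\eta}$ could dominate. To fix this I would instead absorb $|I_n|$ vs $N_n$ discrepancy and the boundary term into the stated error by the sharper observation that $|E_n| \lesssim N_n \cdot (R_n^{1-\eta}/R_n) \cdot (\text{surface-to-volume}) \lesssim N_n R_n^{-\eta}$, and then note that for the corollary to hold for \emph{all} $1\ll t\ll R_n^2$ one takes the freely-chosen $\eta$ close enough to $1/2$ that $R_n^{-\eta}\lesssim t/R_n^2$ whenever $t\ge R_n^{2(1-\eta)}$, while for smaller $t$ one re-runs the argument with a correspondingly smaller interior time scale. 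The cleanest route: choose the boundary exponent adaptively as $\eta = \eta(t)$ with $R_n^{1-\eta} \asymp \sqrt{t\log R_n}$ (the natural diffusive boundary width for time $t$), giving $|E_n| \lesssim N_n \sqrt{t}/R_n \cdot \mathrm{polylog}$, hence boundary error $\lesssim N_n \sqrt{t}/R_n \le N_n t/R_n^2$ once $t\gtrsim 1$ — absorbing it into the stated $N_n t/R_n^2$ term.

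For the tail/upper bound I would additionally confirm there is no large-$t$ issue: since $t \ll R_n^2$, we never enter the regime where Intrinsic Ultracontractivity and Faber–Krahn force $p_t^{G_n}(x,x) \le C N_n^{-1} e^{-ct/R_n^2}$; but even invoking that bound only helps, showing $t\sum_x p_t^{G_n}(x,x) \le C t \cdot e^{-ct/R_n^2} = O(N_n)$ trivially. Assembling: $t\sum_k e^{-t\lambda_k(G_n)} = \cG|I_n| + O(|I_n|t^{-\delta}) + (\text{boundary}) = \cG N_n + O(N_n t^{-\delta} + N_n t/R_n^2)$, using $|I_n| = N_n + O(N_n t/R_n^2)$ from the adaptive choice. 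The specialisation $t = R_n^{2\theta}$ is then immediate: $t^{-\delta} = R_n^{-2\delta\theta}$ and $t/R_n^2 = R_n^{2(\theta-1)}$. The main obstacle is the book-keeping at the two ends of the allowed $t$-window: ensuring a \emph{single} choice (or adaptive family) of boundary width simultaneously makes the interior replacement error super-polynomially small \emph{and} keeps $|E_n|$ within $O(N_n t/R_n^2)$; the diffusive scaling $R_n^{1-\eta}\asymp\sqrt{t\log R_n}$ is the device that threads this needle, and verifying the Gaussian exit estimate at exactly this width (rather than the fixed $R_n^{1-\eta}$ used for \Cref{thm:main}) is the one genuinely new computation, though it is a routine application of the maximal inequality already cited.
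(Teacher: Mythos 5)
The paper gives no proof of \Cref{cor:heat_trace}, so your attempt cannot be compared against an argument in the text; it must stand on its own.

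You correctly diagnose the normalisation problem: as printed, the left-hand side $\sum_k e^{-t\lambda_k(G_n)}$ tends to $0$ as $t\to\infty$ while the right-hand side tends to $\cG N_n$, so the only reading consistent with $p_t^{G_n}(x,x)\approx \cG/t$ on the bulk is $t\sum_k e^{-t\lambda_k(G_n)}=t\,\tr(e^{-t\LL_n})$, exactly as you say. The interior analysis (buffer of width $\asymp\sqrt{t\log R_n}$, super-polynomially small exit probability via \Cref{prop:maximal}, (QH) to replace $t\,p_t(x,x)$ by $\cG+O(t^{-\delta})$) is sound and gives the $N_n t^{-\delta}$ piece.

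However, the boundary step contains a genuine arithmetic error that masks a real obstruction. You write ``boundary error $\lesssim N_n\sqrt{t}/R_n\le N_n t/R_n^2$ once $t\gtrsim 1$''; but $\sqrt{t}/R_n\le t/R_n^2$ is equivalent to $R_n\le\sqrt{t}$, i.e.\ $t\ge R_n^2$, which is precisely the \emph{opposite} of the mesoscopic window $t\ll R_n^2$. In that window one always has $N_n\sqrt{t}/R_n \ge N_n t/R_n^2$, so the adaptive buffer does not let you absorb the boundary term into the stated error. Moreover this is not merely a bookkeeping gap: the boundary contribution to $t\,\tr(e^{-t\LL_n})$ is genuinely of order $N_n\sqrt{t}/R_n$. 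A layer of thickness $\asymp\sqrt{t}$ has $\asymp R_n\sqrt{t}$ vertices (\Cref{lem:boundary_volume} with $W\asymp\sqrt{t}$), and for those vertices the Dirichlet diagonal is reduced by a constant fraction of $1/t$ (reflection heuristic, or directly from the Gaussian bounds in \Cref{prop:maximal}), so the loss is $\Theta(R_n\sqrt{t}\cdot t^{-1})\cdot t=\Theta(R_n\sqrt{t})=\Theta(N_n\sqrt{t}/R_n)$. This is the discrete analogue of the $|\partial\Omega|$-term in the Kac heat-trace expansion in $d=2$, and it dominates $N_n t/R_n^2$ whenever $t\ll R_n^2$. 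Thus your argument, even with the normalisation fix, proves $t\sum_k e^{-t\lambda_k(G_n)}=\cG N_n + O\big(N_n t^{-\delta}+N_n\sqrt{t}/R_n\big)$, and with $t=R_n^{2\theta}$ the second error becomes $O(N_n R_n^{\theta-1})$ rather than $O(N_n R_n^{2(\theta-1)})$; the corollary's stated remainder appears to be too small, and your claim that the boundary can be pushed below $N_n t/R_n^2$ does not survive the inequality check.
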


\begin{proposition}[Weaker remainder under averaged on-diagonal control]\label{prop:weaker_remainder}
Assume (VG(2)) and (PI). Suppose instead of (QH) we only have two-sided pointwise bounds $c_1 t^{-1}\le p_t(x,x)\le c_2 t^{-1}$ for $t\ge t_0$ and an averaged deviation: there exist $t_0,C<\infty$, $\alpha>0$ with
\[
  \Big| \frac{1}{N_n} \sum_{x\in G_n} \Big( t p_t(x,x) - \cG \Big) \Big| \le C (\log t)^{-\alpha}\qquad(t\ge t_0,\;\forall n).
\]
Then:
\begin{enumerate}
  \item If $\alpha>1$, $Z_n(1)=\cG N_n \log N_n + O(N_n)$.
  \item If $\alpha=1$, $Z_n(1)=\cG N_n \log N_n + O(N_n \log\log N_n)$.
\end{enumerate}
\emph{No universal bound is claimed for $\alpha<1$.} The summable $t^{-1-\delta}$ rate (QH) corresponds to $\alpha=\infty$ and removes the logarithmic loss.
\end{proposition}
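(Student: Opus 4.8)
The plan is to use the heat-kernel representation of the trace and keep the killing error under sharp control, substituting the averaged hypothesis for the pointwise expansion (QH) used in \Cref{thm:main}; the skeleton is that of \Cref{thm:main}, but the absence of a pointwise on-diagonal law forces one to work with the global quantity $\sum_{x\in G_n}p_t(x,x)$ rather than with individual interior vertices. Writing $\tr(P_t^{G_n}):=\sum_{x\in G_n}p_t^{G_n}(x,x)$ we have $Z_n(1)=\tr(\LL_n^{-1})=\sum_{t\ge0}\tr(P_t^{G_n})$. Fix the cutoff $M_n:=R_n^2$ and split the time sum into $0\le t<t_0$, $t_0\le t\le M_n$, and $t>M_n$; the first range contributes $O(N_n)$ since $p_t^{G_n}(x,x)\le p_t(x,x)\lesssim1$. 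On the middle range the key device is the global identity
\[
  \sum_{x\in G_n}p_t^{G_n}(x,x)=\sum_{x\in G_n}p_t(x,x)-\sum_{x\in G_n}r_t^{(n)}(x),\qquad r_t^{(n)}(x):=p_t(x,x)-p_t^{G_n}(x,x)\ge0,
\]
which, unlike the interior/boundary split in the sketch of \Cref{thm:main}, holds for \emph{every} $t$; by the strong Markov property at the first-exit time $\tau_n$ of $G_n$ one has $r_t^{(n)}(x)=\E_x[\mathbf{1}_{\{\tau_n\le t\}}\,p_{t-\tau_n}(X_{\tau_n},x)]$.

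\paragraph{The leading term.}
The averaged deviation hypothesis says, for each $t\in[t_0,M_n]$, that $\big|\sum_{x\in G_n}p_t(x,x)-\cG N_n/t\big|\le C N_n(\log t)^{-\alpha}/t$. Summing over $t_0\le t\le M_n$, with $\sum_{t_0\le t\le M_n}t^{-1}=\log M_n+O(1)$ and $\log M_n=\log N_n+O(1)$ (from (VG(2)), since $N_n\asymp R_n^2$),
\[
  \sum_{t_0\le t\le M_n}\sum_{x\in G_n}p_t(x,x)=\cG N_n\log N_n+O(N_n)+O\!\Big(N_n\!\!\sum_{t_0\le t\le M_n}\!\!\tfrac{(\log t)^{-\alpha}}{t}\Big),
\]
and the error sum is comparable to $\int_{\log t_0}^{\log M_n}v^{-\alpha}\,dv$: it is $O(1)$ when $\alpha>1$ and $O(\log\log N_n)$ when $\alpha=1$, which is exactly the stated dichotomy. (For $\alpha<1$ the same computation yields $O((\log N_n)^{1-\alpha})$, strictly larger than $\log\log N_n$, so no clean universal bound is recorded there.)

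\paragraph{The killing correction.}
It remains to absorb $\sum_{t\le M_n}\sum_x r_t^{(n)}(x)$ and the tail $\sum_{t>M_n}\tr(P_t^{G_n})$ into the $O(N_n)$ remainder; the correction is the crux. Interchanging the nonnegative sums, $\sum_{t=0}^{M_n}r_t^{(n)}(x)\le\E_x[\mathbf{1}_{\{\tau_n\le M_n\}}\,g_{M_n}(X_{\tau_n},x)]$ with $g_M(y,z):=\sum_{s=0}^{M}p_s(y,z)$. Since $X_{\tau_n}\in V\setminus G_n$ we have $d(x,X_{\tau_n})\ge\rho(x):=d(x,V\setminus G_n)$, and the Gaussian upper bound (from VD$+$PI together with (VG(2))) gives $p_s(y,z)\lesssim s^{-1}e^{-cd(y,z)^2/s}$ for $s\ge1$, while $p_s(y,z)=0$ for $s<d(y,z)$. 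Splitting the sum defining $g_{M_n}(y,x)$ at $s=d(x,y)^2$, the block $s\le d(x,y)^2$ contributes $O(1)$ (substitute $u=d(x,y)^2/s\ge1$) and the block $s>d(x,y)^2$ contributes $O(\log^+(M_n/d(x,y)^2))$; hence $g_{M_n}(X_{\tau_n},x)\lesssim1+\log^+(R_n/\rho(x))$ deterministically, so $\sum_{t=0}^{M_n}r_t^{(n)}(x)\lesssim1+\log^+(R_n/\rho(x))$. Now I would invoke the boundary-layer volume bound $\#\{x\in G_n:\rho(x)\le s\}\lesssim sR_n$ for $1\le s\le R_n$ — a consequence of VD$+$PI, and exactly what underlies the estimate $|E_n|=O(N_n^{1-\eta/2})$ in the sketch of \Cref{thm:main} — so that, by the layer-cake formula,
\[
  \sum_{x\in G_n}\big(1+\log^+(R_n/\rho(x))\big)=N_n+\int_1^{R_n}\frac{\#\{x\in G_n:\rho(x)<r\}}{r}\,dr\lesssim N_n+\int_1^{R_n}\!R_n\,dr\asymp N_n,
\]
giving $\sum_{t\le M_n}\sum_x r_t^{(n)}(x)=O(N_n)$. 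I expect this correction step to be the main obstacle: one must read off from the first-exit decomposition precisely the $\log(R_n/\rho(x))$ ``cost'' of a vertex near $\partial G_n$ and see it swallowed by the thinness of the boundary annuli; the cruder uniform bound $g_{M_n}(X_{\tau_n},x)\lesssim\log R_n$ would inflate the remainder to $O(N_n\log N_n)$ and destroy sharpness.

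\paragraph{The tail and conclusion.}
Faber--Krahn on $G_n$ gives $\lambda_1(G_n)\gtrsim R_n^{-2}$; since $\tr(P_t^{G_n})=\sum_k(1-\lambda_k(G_n))^t$ and $0\le1-\lambda_k(G_n)\le1-\lambda_1(G_n)<1$, we get $\tr(P_t^{G_n})\le(1-\lambda_1(G_n))^{t-M_n}\tr(P_{M_n}^{G_n})$ for $t\ge M_n$, with $\tr(P_{M_n}^{G_n})\le\sum_x p_{M_n}(x,x)\lesssim N_n/M_n$ by the pointwise upper bound, whence
\[
  \sum_{t>M_n}\tr(P_t^{G_n})\le\tr(P_{M_n}^{G_n})\sum_{s\ge1}\!\big(1-\lambda_1(G_n)\big)^{s}\lesssim\frac{N_n}{M_n\,\lambda_1(G_n)}\lesssim N_n
\]
(alternatively one may invoke intrinsic ultracontractivity on balls, \Cref{lem:IU_derivation}, as in \Cref{thm:main}). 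Collecting the three ranges with the identity and the two estimates above, $Z_n(1)=\cG N_n\log N_n+O(N_n)$ when $\alpha>1$ and $Z_n(1)=\cG N_n\log N_n+O(N_n\log\log N_n)$ when $\alpha=1$, with all implicit constants depending only on the structural data; the summable rate $t^{-1-\delta}$ of (QH) corresponds to $\alpha=\infty$, which removes even the $\log\log$ loss and recovers \Cref{thm:main}.
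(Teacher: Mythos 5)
Your proof is correct and notably more complete than the paper's three-line sketch. The paper's sketch really only makes the upper bound explicit (via $p_t^{G_n}\le p_t$ for $t\le R_n^2$, the averaged deviation hypothesis, and IU for the tail), leaving the matching lower bound implicit; a literal adaptation of \Cref{lem:lower} would reinstate the interior/boundary $\eta$-decomposition and require first restricting the averaged control from $G_n$ to $I_n$ at a cost $O(|E_n|/t)$. You avoid the $\eta$-decomposition altogether: the first-exit identity $r_t^{(n)}(x)=\E_x\big[\mathbf{1}_{\{\tau_n\le t\}}\,p_{t-\tau_n}(X_{\tau_n},x)\big]$, a Gaussian split of $g_{M_n}(y,x)$ at $s=d(x,y)^2$, and a layer-cake sum against the annulus-volume profile give the killing correction $\sum_{t\le M_n}\sum_x r_t^{(n)}(x)=O(N_n)$ directly, and since $r_t^{(n)}\ge0$ this yields the two-sided estimate in one stroke with no free parameter to send to zero at the end. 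Your tail estimate is also leaner: the spectral comparison $\tr(P_t^{G_n})\le(1-\lambda_1)^{t-M_n}\tr(P_{M_n}^{G_n})$ together with Faber--Krahn and $\tr(P_{M_n}^{G_n})\lesssim N_n/M_n$ does the job without invoking full IU, which is fitting here since the averaged hypothesis gives no pointwise uniformity on $p_t^{G_n}(v,v)$. One caveat you share with the paper: the step $\#\{x:\rho(x)<r\}\lesssim rR_n$ is precisely the annulus bound underlying \Cref{lem:boundary_volume}, and the paper's own derivation of it from bare VG(2) with $c_1<c_2$ is not airtight as written; you inherit rather than introduce that gap, and it would be worth flagging that some additional annulus/volume-continuity input (e.g.\ from reverse doubling or PHI) is being used.
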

\begin{proof}[Proof sketch]
Write $Z_n(1)=\sum_{x\in G_n}\sum_{t\ge1} p_t^{G_n}(x,x)$. For $t\le R_n^2$, $p_t^{G_n}(x,x)\le p_t(x,x)$. Replace $p_t(x,x)$ by $\cG/t$ to obtain the main term. The averaged deviation hypothesis and Abel summation give
\[
\sum_{t\le R_n^2} \frac{1}{t} \Big( \frac{1}{N_n}\sum_{x} (t p_t(x,x)-\cG) \Big)=
\begin{cases}O(1), & \alpha>1,\\ O(\log\log R_n), & \alpha=1.\end{cases}
\]
Multiplying by $N_n$ yields the stated error. The tail $t>R_n^2$ contributes $O(N_n)$ by IU; without a summable rate this is still $O(N_n)$ but the earlier part acquires the $\log\log$ factor when $\alpha=1$.\end{proof}

\begin{remark}[Log--log loss without IU / quantitative rate]\label{rem:loglog}
If one forgoes either (i) Intrinsic Ultracontractivity (replacing it only by the crude bound $p_t^{G_n}(x,x)\le e^{-\lambda_1 t}$) \emph{or} (ii) the summable $t^{-1-\delta}$ error in (QH) (replacing it by $O(t^{-1})$), then an unavoidable extra $\log\log N_n$ term can appear in the aggregated contribution of large times or fluctuation sums, inflating the remainder to $O(N_n \log\log N_n)$. This justifies the strength and necessity of both inputs in the sharp form of \Cref{thm:main}.
\end{remark}

\subsection{Context, Significance, and Novelty}
The $N_n \log N_n$ divergence is characteristic of the critical dimension $d=2$, where the random walk is recurrent, contrasting sharply with other dimensions (see \Cref{tab:growth} and \Cref{app:growth} for comparisons).

\begin{table}[h!]
\centering
\small
\caption{Asymptotics of $Z_n(1)$ under $V(R) \asymp R^d$. Dimension $d=2$ is critical.}
\label{tab:growth}
\begin{tabular}{@{}lllll@{}}\toprule
	\textbf{Dimension $d$} & \textbf{Volume growth} & \textbf{Recurrence} & \textbf{$G_{G_n}(v,v)$ interior} & \textbf{$Z_n(1)$} \\
\midrule
1 & Linear & Recurrent & $\asymp N_n$ & $\asymp N_n^{2}$ \\
2 & Quadratic & Recurrent & $\asymp \log N_n$ & $\asymp N_n \log N_n$ \\
$\ge 3$ & Polynomial & Transient & $O(1)$ & $\asymp N_n$ \\
\bottomrule
\end{tabular}
\end{table}

\Cref{thm:main} generalizes classical results known for regular structures. For Euclidean domains and tori, similar asymptotics for spectral zeta functions have been studied (cf. \cite{Colin85, FrankSabin11}). In the graph setting, results on the trace of the Green function have been established for highly regular graphs (e.g., results discussed in \cite{MizunoTachikawa03, Kaimanovich00}).

Our contribution lies in extending this connection to a broad class of potentially highly irregular graphs characterized by large-scale geometric (VG(2)) and analytic (PI) properties, supplemented by the quantitative homogenisation assumption. The novelty is the demonstration that this sharp asymptotic holds without requiring local uniformity or translational invariance. The approach is modular, isolating the roles of geometry (VG(2)+PI), stochastic homogenisation (LCLT with rate), and boundary regularity (CDC/IU).

\subsection{Examples and Scope}
The assumptions capture a wide variety of graphs that are metrically two-dimensional but may be combinatorially irregular.

\begin{example}[The $\mathbb{Z}^2$ case]\label{rem:pi2}
Consider the standard lattice $\mathbb{Z}^2$. The LSRW (defined in \Cref{sec:prelim}) has a step distribution covariance matrix $\Sigma = \frac{1}{4} I_2$. Using \Cref{rem:G_formula}:
\[
\cG = \frac{1}{2\pi \sqrt{\det(\Sigma)}} = \frac{1}{2\pi (1/4)} = \frac{2}{\pi}.
\]
\Cref{thm:main} implies $\lim_{n \to \infty} Z_n(1) / (N_n \log N_n) = 2/\pi$.
\smallskip
\noindent\emph{Laziness convention.}
The value $\cG=2/\pi$ corresponds to the \emph{lazy} simple random walk with holding probability $1/2$ used throughout this paper (see \Cref{sec:prelim} and \Cref{rem:non-lazy}). For the \emph{non-lazy} simple random walk on $\mathbb{Z}^2$ one has $\Sigma=\tfrac{1}{2}I_2$, hence $\cG=\tfrac{1}{2\pi\sqrt{\det\Sigma}}=\tfrac{1}{\pi}$ and the asymptotic becomes $\lim_{n\to\infty} Z_n(1)/(N_n\log N_n)=1/\pi$.
\end{example}

\begin{example}[Irregular structures]\label{ex:irregular}
Beyond $\mathbb{Z}^2$, the main examples satisfying all assumptions (including quantitative homogenisation, see \Cref{rem:QH_validity} for details and references) are:
\begin{enumerate}
    \item \textbf{Periodic graphs:} Graphs with a cocompact $\mathbb{Z}^2$ action.
    \item \textbf{Random Conductance Model (RCM) on $\mathbb{Z}^2$:} If the conductances are i.i.d., uniformly bounded, and elliptic, the resulting graph satisfies VG(2) and PI almost surely, and the required uniform quantitative homogenisation holds \cite{Biskup11, AndresDeuschelSlowik19}.
    \item \textbf{Supercritical Percolation on $\mathbb{Z}^2$:} The infinite cluster (for $p>p_c(\mathbb{Z}^2)=1/2$) satisfies VG(2) and PI almost surely \cite{Barlow04}. Quantitative homogenization results have also been established in this setting (see \Cref{rem:QH_validity}).
\end{enumerate}
\end{example}

\subsection{Methodology Overview and Structure}

We employ a time-domain, fully discrete approach based on an interior-boundary decomposition strategy. By analyzing the sums of the heat kernel directly (rather than using Tauberian theorems on the spectral measure), we cleanly isolate the required inputs. The proof relies heavily on techniques derived from Volume Doubling (VD) and the Poincaré inequality (PI).

The lower bound relies on showing that the killed walk starting in the interior rarely reaches the boundary within the relevant timescale, using maximal inequalities derived from Gaussian bounds.

The upper bound uses a short/long time split. The short time uses the full-space LCLT sum. The long-time contribution is controlled sharply using Intrinsic Ultracontractivity (IU) for metric balls. This technique is pivotal as it avoids spurious $\log\log$ terms that would arise from using cruder estimates (see \Cref{rem:IU_necessity}). Both bounds crucially depend on the assumed quantitative homogenisation rate (\Cref{ass:QH}) to sum the LCLT error terms.

\paragraph{Structure of the paper.}
\Cref{sec:prelim} covers the preliminary definitions, assumptions, and key analytic tools, including a detailed discussion of the justification for IU under VD+PI. \Cref{sec:decomposition} introduces the interior-boundary decomposition method. \Cref{sec:lower_bound} and \Cref{sec:upper_bound} detail the proofs of the lower and upper bounds. Finally, \Cref{sec:discussion} concludes the proof of \Cref{thm:main} and discusses the assumptions and extensions. The appendices provide context on other growth regimes and explore numerical examples of the $\pi$-identities.

\section{Preliminaries and Analytic Tools}\label{sec:prelim}

\paragraph{Notation (reference).} All symbols were fixed in the central Notation block; we only introduce new ones explicitly when they appear.

Let $G = (V,E)$ be an infinite, connected graph with bounded maximum degree $\Delta < \infty$. We define metric balls as closed: $B_R(x) = \{y \in V : d_G(x,y) \leq R\}$.

\paragraph{Convention on constants, normalization, uniformity.}
Throughout, implicit constants depend only on the structural data \textbf{(VD, PI, $\Delta$)} and, when used, on \textbf{(QH)}$=(\cG,\delta,t_0,C_{QH})$. All $L^2$ norms are with respect to counting measure; the ground state $\phi_1$ is $L^2$-normalised; no volume renormalisation is performed. Unless stated otherwise, $O(1)$ terms and statements claimed "uniformly in $x$" are uniform across space.

\subsection{Geometric and Analytic Assumptions}

\begin{definition}[Quadratic Volume Growth (VG(2))]
$G$ has (uniform) quadratic volume growth if there exist $c_1, c_2 > 0$ such that for all $x \in V$ and $R \geq 1$,
\begin{equation}\label{eq:quad}
c_1 R^{2} \leq V(x,R) \leq c_2 R^{2}.
\end{equation}
\end{definition}
This implies the Volume Doubling (VD) property: $V(x,2R) \leq C_D V(x,R)$.

\begin{definition}[Capacity Density Condition (CDC)]\label{def:CDC}
Let $H\subseteq V$ be finite and let $U\subseteq H$ be open in the graph metric. We say $H$ satisfies CDC (uniformly) if there exist $\theta\in(0,1)$ and $r_0\ge1$ such that for every $x\in H$ and $r\in[1,r_0\wedge \mathrm{diam}(H)]$ with $B_{2r}(x)\subseteq H$,
\[
  \mathrm{Cap}_{B_{2r}(x)}\big(B_{r}(x),\,B_{2r}(x)\setminus B_{r}(x)\big)\ \ge\ \theta\,\mathrm{Cap}_{B_{2r}(x)}\big(B_{r}(x)\big),
\]
where capacities are computed with respect to the Dirichlet form on $B_{2r}(x)$. Intuitively, CDC rules out "thorns" and ensures quantitative boundary regularity.
\end{definition}

\begin{definition}[Poincaré Inequality (PI)]\label{def:PI}
$G$ satisfies a (scaled) Poincaré inequality if there exists $C_P > 0$ such that for any ball $B_R=B_R(x_0)$ and any function $f: V \to \R$,
\[
\sum_{x \in B_R} (f(x) - \bar{f}_{B_R})^2 \leq C_P R^2 \, \cE_{B_{2R}}(f,f),
\]
where $\bar{f}_{B_R}$ is the average of $f$ over $B_R$ (w.r.t. counting measure), and the local Dirichlet form $\cE_U(f,f)$ is defined as
\[
\cE_U(f,f) = \sum_{\substack{\{x,y\} \in E \\ x,y \in U}} (f(x)-f(y))^2.
\]
\end{definition}

\begin{remark}\label{rem:PI_equivalence}
Under the VD condition, this formulation of PI is equivalent to the local version (see \cite{HajlaszKoskela00}). The combination of VD and PI (VD+PI) is central to analysis on graphs (see \cite{GrigoryanTelcs12}).
\end{remark}

\paragraph{Standing assumptions.} We assume $G$ is infinite, connected, has bounded degree $\Delta < \infty$, satisfies VG(2) (and thus VD), and PI.

\subsection{Random Walk and the Analytic Framework}

We consider the \emph{lazy} simple random walk (LSRW) $(X_t)_{t \geq 0}$. This is a discrete-time Markov chain where at each step, the walk stays put with probability $1/2$ or moves to a uniformly chosen neighbor with probability $1/2$. The transition matrix is $P = \frac{1}{2}(I + P_{SRW})$, where $P_{SRW}(x,y) = 1/\deg(x)$ if $y \sim x$. The heat kernel is $p_t(x,y) = \Prob_x[X_t = y]$. The generator (Laplacian) is $\LL = I - P$.

Laziness ensures the walk is aperiodic, simplifying spectral analysis. It also relates the generators: $\LL_{\text{LSRW}} = \frac{1}{2} \LL_{\text{SRW}}$ (see \Cref{rem:non-lazy}).

\begin{remark}[Measures and Operators]\label{rem:measures_operators}
The LSRW is reversible w.r.t. the degree measure $m(x)=\deg(x)$. The Laplacian $\LL$ is self-adjoint on the Hilbert space $\ell^2(m)$. Due to the assumption of bounded degree ($\Delta < \infty$), the counting measure $|\cdot|$ and the degree measure $m$ are comparable: $m(A) \asymp |A|$. This allows seamless transition between analytic results formulated w.r.t. $m$ (such as those in \Cref{lem:IU_derivation}) and geometric properties formulated w.r.t. $|\cdot|$ (see \cite{Coulhon03}).

On a finite subgraph $H$, the Dirichlet Laplacian $\LL_H$ acting on $\ell^2(m|_H)$ is similar to the operator acting on $\ell^2(|\cdot||_H)$ (via the transformation induced by the square root of the measure densities, $M^{1/2}$). Since the trace is similarity-invariant in finite dimensions, $Z_H(1) = \tr(\LL_H^{-1})$ is independent of the chosen (comparable) inner product.

We emphasize that we work with the probabilistic Laplacian $\LL=I-P$ (or its similar normalized form), not the combinatorial Laplacian $D-A$.

Unless otherwise stated, all heat-kernel quantities ($p_t$, $p_t^H$, etc.) are those of the LSRW probability kernel. We often suppress the graph index in the Green function (e.g., writing $G_H(v,v)$ instead of $G_{H}(v,v)$) when clear from context.
\end{remark}

The combination of VD and PI is fundamental:

\begin{theorem}[\cite{Delmotte99}]\label{thm:Delmotte}
The combination of VD and PI is equivalent to the Parabolic Harnack Inequality (PHI).
\end{theorem}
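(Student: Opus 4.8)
The statement to prove is Delmotte's theorem: for a graph of bounded degree, the conjunction VD $+$ PI is equivalent to the parabolic Harnack inequality PHI. Since this is a well-known theorem rather than a computation, I would organise the proof along the classical two implications, each with its own technology. The plan is to prove PHI $\Rightarrow$ (VD $+$ PI) first, as this direction is the shorter one, and then PHI $\Leftarrow$ (VD $+$ PI), which requires the Moser iteration machinery adapted to the discrete setting.

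\textbf{Direction 1: PHI $\Rightarrow$ VD $+$ PI.} First I would extract VD from PHI. The idea is to apply the parabolic Harnack inequality to the heat kernel $u(t,y)=p_t(x_0,y)$ on a space-time cylinder anchored at $(R^2,x_0)$; PHI then gives $p_{R^2}(x_0,x_0)\lesssim p_{2R^2}(x_0,y)$ for all $y\in B_R(x_0)$, while conservation of mass $\sum_y p_{2R^2}(x_0,y)m(y)=1$ together with the on-diagonal lower bound (itself a consequence of iterating PHI, giving $p_t(x_0,x_0)\gtrsim 1/V(x_0,\sqrt t)$) yields $V(x_0,R)\lesssim V(x_0,2R)$-type control; a standard comparison argument upgrades this to genuine VD. Next, to obtain PI, I would use the well-known fact that PHI implies an \emph{elliptic} Harnack inequality for $\LL$-harmonic functions plus two-sided Gaussian heat kernel bounds, and that Gaussian \emph{lower} bounds (the near-diagonal lower bound $p_t(x,y)\gtrsim V(x,\sqrt t)^{-1}$ for $d(x,y)\le\sqrt t$) are equivalent, in the presence of VD, to the Poincaré inequality; alternatively one can quote the chain VD $+$ PHI $\Rightarrow$ Gaussian bounds $\Rightarrow$ PI directly. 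In the discrete bounded-degree setting all of these manipulations are robust because $m\asymp|\cdot|$ (Remark~\ref{rem:measures_operators}) and the generator $\LL=I-P$ has bounded range.

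\textbf{Direction 2: VD $+$ PI $\Rightarrow$ PHI.} This is the substantive half. The strategy is Grigor'yan--Saloff-Coste--Delmotte-style: (i) from VD $+$ PI derive a family of Sobolev/Nash inequalities on balls (the Faber--Krahn inequality $\lambda_1(\Omega)\gtrsim V(x,R)^{2/2}/(R^2|\Omega|)$ type estimate), using that PI plus VD yields a scale-invariant Sobolev inequality via the Jerison-type self-improvement; (ii) feed the Nash inequality into a discrete Nash--Moser argument to get the on-diagonal upper bound $p_t(x,y)\lesssim 1/V(x,\sqrt t)$; (iii) combine with an integrated-exponential (Davies--Gaffney) estimate — available from the finite-range structure of $\LL$ — to promote this to the full Gaussian upper bound; (iv) obtain the matching Gaussian lower bound by first establishing a near-diagonal lower bound from the on-diagonal bound plus a parabolic mean-value/oscillation estimate, then chaining along a geodesic using VD; (v) run discrete Moser iteration on subsolutions and supersolutions of the parabolic equation $\partial_t u=-\LL u$ on space-time cylinders, using the caloric Caccioppoli inequality and the Sobolev inequality from step (i), and bridge the resulting $L^p$–$L^{-p}$ bounds across the "forbidden zone" via a discrete John--Nirenberg / logarithm-of-supersolution lemma, whose $BMO$ input is exactly PI. Assembling these yields the full PHI on cylinders $Q=(s-R^2,s)\times B_R$.

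\textbf{Main obstacle.} The delicate point is step (v) of Direction 2: the passage from the $L^p$ estimates on supersolutions to a genuine pointwise lower bound across the "middle" time-slab, i.e. the discrete John--Nirenberg lemma controlling $\log u$ in $BMO$ on parabolic cylinders. In the continuous setting this rests on a clean Poincaré inequality for the log of a supersolution; in the discrete bounded-degree setting one must be careful that the combinatorial cut-off functions and the telescoping along edges interact correctly with the chain condition implicit in VD $+$ PI — but since these technical adaptations are by now classical (Delmotte's original argument, or the exposition in references such as those around \Cref{thm:Delmotte}), I would cite them rather than reproduce the full iteration. The only genuinely graph-specific subtleties — aperiodicity (needed so the heat kernel does not oscillate and two-sided bounds make sense) and the comparability of counting and degree measures — are handled at the outset by our standing use of the \emph{lazy} walk and bounded degree.
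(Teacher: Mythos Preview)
The paper does not prove this statement at all: \Cref{thm:Delmotte} is stated with an explicit citation to \cite{Delmotte99} in the theorem header and is used as a black-box input to the analytic toolchain (see \Cref{rem:chain}). There is no proof, sketch, or argument in the paper beyond the attribution; the result is simply quoted from the literature.

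Your outline is a reasonable high-level sketch of how Delmotte's original argument runs (and of the Grigor'yan--Saloff-Coste framework it parallels), and nothing in it is wrong as a roadmap. But for the purposes of this paper it is unnecessary: the intended ``proof'' here is a one-line citation, and reproducing the Moser iteration / John--Nirenberg machinery would be out of scope for a paper that merely consumes PHI as a tool.
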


PHI provides strong regularity for solutions to the heat equation, which translates to precise estimates on the random walk.

\begin{remark}[Analytic implication chain]\label{rem:chain}
Under VD+PI one has PHI by Delmotte~\cite{Delmotte99}. PHI yields \emph{two-sided Gaussian bounds} for the heat kernel (see \cite{Delmotte99}; also \cite[§5.3]{Grigoryan09}), which in turn imply
\emph{maximal/exit} estimates (e.g.\ \cite[Thm.~5.5.3]{Grigoryan09}). On metric balls, CDC holds uniformly under PHI (Barlow–Bass \cite{BarlowBass04}), and together with PHI one obtains \emph{boundary Harnack} and related potential-theoretic controls (Barlow–Bass–Kumagai \cite{BarlowBassKumagai09}).
The Faber–Krahn inequality (FK) follows under VD+PI (see \cite[Ch.~8]{Grigoryan09} and \cite{BarlowBass04}).
These ingredients yield uniform $L^\infty$ bounds for the ground state and, via the ground-state transform, \emph{Intrinsic Ultracontractivity} (IU) on balls (see, e.g., \cite{Grigoryan09,BassKumagai08}).
We will tacitly use this chain throughout, and cite the above references at the first occurrence of each implication.
\end{remark}

\subsection{Heat Kernel Toolbox}
We summarize the crucial analytic tools.

\subsubsection{Maximal Inequality and Gaussian Bounds}

\begin{proposition}[Consequences of PHI, \cite{Delmotte99, Grigoryan09}]\label{prop:maximal}
Under VD+PI:
\begin{enumerate}

    \item (Gaussian Bounds) There exist $C_G, c_G > 0$ such that
    \[
    p_t(x,y) \leq \frac{C_G}{\sqrt{V(x,\sqrt{t}) V(y,\sqrt{t})}} \exp\left(-c_G \frac{d(x,y)^2}{t}\right).
    \]
    Under the VG(2) assumption, this specializes to $p_t(x,y) \lesssim t^{-1} \exp(-c_G d(x,y)^2/t)$.
    \item (Maximal Inequality) There exist $C_M,c_M > 0$ such that for any $v \in V$, $t \geq 0$, and $d \geq 1$,
    \begin{equation}\label{eq:max_ineq}
    \small
    \Prob_v\left( \max_{0 \leq s \leq t} d_G(v,X_s) \geq d \right) \leq C_M \exp\left(-c_M \frac{d^2}{t+1}\right).
    \end{equation}
\end{enumerate}
\end{proposition}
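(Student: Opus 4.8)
\textbf{Proof proposal for \Cref{prop:maximal}.}

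The plan is to derive both items as consequences of the Parabolic Harnack Inequality (PHI), which is available here by \Cref{thm:Delmotte} since we assume VD and PI. For the Gaussian upper bound, I would invoke the standard equivalence between PHI and two-sided sub-Gaussian (here: Gaussian, since the walk time-scales diffusively under VD+PI) heat kernel estimates; on graphs this is the discrete analogue of Grigor'yan--Saloff-Coste--Sturm theory, and the precise statement for graphs with bounded degree is due to Delmotte \cite{Delmotte99}. Concretely, PHI $\Rightarrow$ an on-diagonal bound $p_t(x,x)\lesssim 1/V(x,\sqrt t)$ together with a near-diagonal lower bound, and then the off-diagonal Gaussian decay follows by the usual chaining/Davies--Gaffney perturbation argument (or, in the discrete setting, by an integrated maximum principle applied to $\sum_y p_t(x,y)^2 e^{\psi(y)}$ for a Lipschitz weight $\psi$). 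Symmetrizing the two endpoints gives the stated form with $\sqrt{V(x,\sqrt t)V(y,\sqrt t)}$ in the denominator; specializing to VG(2), where $V(z,\sqrt t)\asymp t$ uniformly in $z$, collapses the prefactor to $t^{-1}$. I would present this as a citation-backed reduction rather than reproving Delmotte's theorem, noting only that laziness guarantees aperiodicity so no parity obstruction enters and the continuous-time heuristics apply verbatim to the discrete kernel.

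For the maximal inequality \eqref{eq:max_ineq}, I would first record the one-time-slice tail bound: summing the Gaussian bound over $y$ with $d_G(v,y)\ge d$ and using VD to control the volume of annuli gives $\Prob_v(d_G(v,X_t)\ge d)\le C\exp(-c\,d^2/(t+1))$ for a possibly smaller $c$ (the $+1$ absorbs the small-$t$ regime where the bound is trivial since $d\ge1$). To upgrade this to a bound on the running maximum, I would use a Markov-type reflection argument adapted to reversible chains: let $\tau=\inf\{s: d_G(v,X_s)\ge d\}$ and decompose on the value of $X_\tau$ (which lies in the sphere of radius $d$ or $d+1$ about $v$, since each step moves distance at most $1$). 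On $\{\tau\le t\}$, either the walk is still at distance $\ge d/2$ from $v$ at time $t$, or it has traveled distance $\ge d/2$ from $X_\tau$ during $[\tau,t]$; applying the strong Markov property at $\tau$ and the one-slice bound to each piece, together with a standard "weak Markov / last-exit" estimate to handle the lack of a clean reflection, yields $\Prob_v(\tau\le t)\le 2\sup_{s\le t}\sup_z \Prob_z(d_G(z,X_{t-s})\ge d/2)\le C\exp(-c' d^2/(t+1))$. This is precisely the discrete analogue of \cite[Thm.~5.5.3]{Grigoryan09}, which I would cite for the clean statement.

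The main obstacle is the upgrade from a fixed-time tail to the running-maximum tail: unlike Brownian motion there is no exact reflection principle for the LSRW, so the "split at the first exit time and restart" step needs a genuine (if routine) argument. The cleanest route is the Lévy-type / Doob maximal inequality packaging used in the graph literature: by the strong Markov property, $\Prob_v(\tau\le t)\le \Prob_v(d_G(v,X_t)\ge d/2)\big/\inf_{z:\,d_G(v,z)\ge d}\Prob_z(d_G(v,X_{t})\ge d/2)$ is \emph{not} quite right because the denominator can be small; instead one conditions on $\mathcal F_\tau$ and uses that from $X_\tau$ (which is within distance $1$ of the sphere of radius $d$) the probability of being at distance $\ge d/2$ from $v$ at time $t$ is $\ge \frac12$ \emph{provided} $t-\tau$ is small, and one handles large $t-\tau$ by the Gaussian lower near-diagonal bound giving that the walk has not had time to diffuse distance $d/2$ back. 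Making this dichotomy quantitative — i.e. choosing the threshold between "small" and "large" $t-\tau$ as a fixed fraction of $d^2$ — is exactly where VG(2) and the two-sided Gaussian bounds are used, and it is the only place the argument is more than a direct quotation. I would carry it out in the lazy continuous-time-flavored discrete setting where these estimates are cleanest, then transfer back by the standard comparison of lazy discrete-time and continuous-time heat kernels.
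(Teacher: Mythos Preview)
Your proposal is correct and follows essentially the same route as the paper: the paper's own ``proof'' is merely a two-sentence note that cites \cite{Delmotte99} for the Gaussian bounds as a consequence of PHI and then says the maximal inequality ``follows from the Gaussian upper bounds via standard arguments involving chaining and union bounds (see, e.g., \cite[Theorem~5.5.3]{Grigoryan09}),'' which is precisely the reduction you outline. Your elaboration of the strong-Markov/first-exit upgrade (including the observation that one may assume $d^2/(t+1)$ is bounded below, since otherwise \eqref{eq:max_ineq} is trivial) is a correct unpacking of those ``standard arguments'' and goes well beyond what the paper actually writes down.
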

\begin{proof}[Note on derivation]
The Gaussian upper bounds are a direct consequence of PHI (see \cite{Delmotte99} for the general form). The maximal inequality (exit time estimate) follows from the Gaussian upper bounds via standard arguments involving chaining and union bounds (see, e.g., \cite[Theorem 5.5.3]{Grigoryan09} or arguments in \cite{Delmotte99}).
\end{proof}

\subsubsection{Heat Kernel Asymptotics (LCLT) and Homogenisation}
A crucial ingredient is the sharp, uniform homogenisation of the heat kernel. We state this as a formal assumption.

\begin{assumption}[Quantitative Homogenisation (QH)]\label{ass:QH}
There exists a constant $\cG > 0$ (the heat-kernel constant), a rate exponent $\delta>0$, a time $t_0\in\N$, and a constant $C_{QH}<\infty$ such that, uniformly in $x\in V$ and for all $t\ge t_0$,
\begin{equation}\label{eq:return-prob}
  p_t(x,x) = \frac{\cG}{t} + r_t(x),\qquad |r_t(x)| \le C_{QH}\, t^{-1-\delta}.
\end{equation}
Equivalently, $|t\,p_t(x,x)-\cG|\le C_{QH} t^{-\delta}$. All constants are independent of $x$.
\end{assumption}

\begin{remark}[Context and literature for \Cref{ass:QH}]\label{rem:QH_validity}
VD+PI already give PHI and an invariance principle. The $O(N_n)$ remainder needs \emph{uniform} summable on-diagonal decay $t p_t(x,x)=\cG+O(t^{-\delta})$, ensuring $\sum_t (t p_t(x,x)-\cG)$ converges. This holds for periodic graphs and in stochastic homogenisation frameworks (uniformly elliptic RCM, supercritical percolation: \cite{Biskup11,AndresDeuschelSlowik19,Barlow04,ArmstrongDario18,GloriaOtto17,CroydonHambly21}). Without summable decay we obtain only $Z_n(1)=\cG N_n\log N_n+O(N_n\log\log N_n)$ (\Cref{prop:weaker_remainder,rem:loglog}).
\smallskip

\noindent\emph{Concrete instances.}
In particular, for i.i.d.\ uniformly elliptic conductances on $\mathbb{Z}^2$, quenched local limit theorems with \emph{polynomial} convergence rates (hence \Cref{ass:QH}) are available (see, e.g., \cite{AndresDeuschelSlowik19,GloriaOtto17,CroydonHambly21}). For the supercritical percolation cluster in $\mathbb{Z}^2$, quantitative homogenisation and heat-kernel control sufficient to deduce \eqref{eq:return-prob} are also established under standard mixing/regularity conditions; see \cite{Barlow04,ArmstrongDario18} and references therein. These results ensure that our standing assumption \textup{(QH)} holds in the main irregular media examples discussed in \Cref{ex:irregular}.
\end{remark}

\begin{remark}[Weaker QH assumptions]\label{rem:weaker_QH}
If one assumes a weaker form of QH, e.g., $p_t(x,x) = \cG/t + o(1/t)$ without a polynomial rate, the summation below yields $\cG \log R + o(\log R)$. This is sufficient to establish the leading order asymptotic $Z_n(1) \sim \cG N_n \log N_n$, but not the sharp $O(N_n)$ remainder.

If the error is exactly $O(1/t)$, i.e., $|t p_t(x,x) - \cG| = O(1)$, the summation error $\sum r_t(x)$ could be $O(\log R)$, leading to an overall error term potentially larger than $O(N_n)$. The polynomial rate $\delta>0$ in \Cref{ass:QH} is precisely what allows the error sum to be $O(1)$.
\end{remark}

We derive the uniform sum of the return probabilities. By \Cref{ass:QH}, we split the sum:
\begin{align*}
\sum_{t=1}^{R} p_t(x,x) &= \sum_{t=1}^{t_0-1} p_t(x,x) + \sum_{t=t_0}^{R} \left(\frac{\cG}{t} + r_t(x)\right) \\
&= O(1) + \cG \sum_{t=t_0}^{R} \frac{1}{t} + \sum_{t=t_0}^{R} r_t(x).
\end{align*}
The harmonic sum is $\sum_{t=t_0}^{R} \frac{1}{t} = \log R + O(1)$.
Crucially, the error sum is bounded uniformly in $x$ because the rate is polynomially fast ($\delta>0$) and thus summable ($1+\delta > 1$):
\[
\left|\sum_{t=t_0}^{R} r_t(x)\right| \leq \sum_{t=t_0}^{\infty} |r_t(x)| \leq C_{QH} \sum_{t=t_0}^{\infty} t^{-1-\delta} < C'.
\]
The constant $C'$ is independent of $x$ due to the uniformity assumed in \Cref{ass:QH}.
Thus, we obtain the uniform asymptotic:
\begin{equation}\label{eq:return-sum}
\sum_{t=1}^{R} p_t(x,x) = \cG \log R + O(1), \qquad R \geq 2,
\end{equation}
where the $O(1)$ is \emph{uniform in $x$}. In particular, choosing $R^2$ as the diffusive timescale yields the "master" identity
\begin{equation}\label{eq:master-sum}
  \sum_{t=1}^{R^{2}} p_t(x,x)
  \;=\; 2\,\cG \log R + O(1),
\end{equation}
uniformly in $x$.

\subsubsection{Dirichlet Problem and Intrinsic Ultracontractivity}
Let $H \subset G$ be a finite connected subgraph.
The Dirichlet generator $\LL_H$ corresponds to the LSRW killed upon exiting the vertex set $V(H)$. The exit time is $\tau_H = \inf\{t \geq 0 : X_t \notin V(H)\}$.
The Dirichlet heat kernel is $p_t^H(x,y) = \Prob_x[X_t = y, t < \tau_H]$. The Dirichlet Green function is $G_H(x,y) = \sum_{t=0}^{\infty} p_t^H(x,y) = (\LL_H^{-1})(x,y)$. The spectral zeta function is $Z_H(1) = \sum_{v \in V(H)} G_H(v,v)$.

To control the long-time behaviour of the Dirichlet heat kernel sharply, we use Intrinsic Ultracontractivity (IU). IU means the heat kernel decays at the rate dictated by the principal eigenvalue $\lambda_1(H)$, while being spatially homogenized by the principal eigenfunction $\phi_1(H)$.

\begin{lemma}[CDC for metric balls]\label{lem:CDC-balls}
Let $G$ satisfy VD+PI (equivalently, PHI). Then there exist uniform parameters (depending only on VD, PI, $\Delta$) such that every ball $H=B_R(x_0)$ satisfies the Capacity Density Condition \textup{(CDC)} of \Cref{def:CDC}.
\end{lemma}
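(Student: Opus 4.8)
\emph{Proof plan.} The plan is to notice that the inequality in \Cref{def:CDC} is only required for pairs $(x,r)$ with $B_{2r}(x)\subseteq H$, and that for such pairs the Dirichlet form on $B_{2r}(x)$ — hence both capacities in the statement — does not involve $H$ at all. So CDC for a given ball $H=B_R(x_0)$ is merely the sub-family, indexed by those admissible $(x,r)$, of the single uniform estimate
\[
  \mathrm{Cap}_{B_{2r}(x)}\big(B_r(x),\,B_{2r}(x)\setminus B_r(x)\big)\ \ge\ \theta\,\mathrm{Cap}_{B_{2r}(x)}\big(B_r(x)\big),\qquad x\in V,\ r\ge1,
\]
and it suffices to prove this for all $x$ and all $r\ge1$ with $\theta$ depending only on the structural data; every ball $H$ then inherits CDC with the same $\theta$ and any $r_0\ge1$. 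Writing $\Omega:=B_{2r}(x)$ and $A:=B_r(x)$, the first move is to evaluate the left-hand capacity: since $A$ and $\Omega\setminus A$ partition $\Omega$, the only function equal to $1$ on $A$ and $0$ on $\Omega\setminus A$ is $\mathbf 1_A$, and any function that is $\ge 1$ on $A$ and $\le 0$ on $\Omega\setminus A$ carries at least as much energy, so
\[
  \mathrm{Cap}_{\Omega}\big(A,\Omega\setminus A\big)=\cE_{\Omega}(\mathbf 1_A,\mathbf 1_A)=|\partial_E^{\Omega}A|,\qquad \partial_E^{\Omega}A:=\{\{y,z\}\in E:\ y\in A,\ z\in\Omega\setminus A\}.
\]
Thus the whole lemma reduces to two elementary bounds: $\mathrm{Cap}_{\Omega}(A)\le C$ with $C$ depending only on (VG(2)) and $\Delta$, and $|\partial_E^{\Omega}A|\ge 1$; one then takes $\theta:=\tfrac12\wedge C^{-1}$.

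For the upper bound I would test with the Lipschitz tent function $\psi(y):=\big(2-d(x,y)/r\big)_+\wedge 1$, which equals $1$ on $B_r(x)$, vanishes on $\{d(x,\cdot)\ge 2r\}$, and satisfies $|\psi(y)-\psi(z)|\le 1/r$ whenever $y\sim z$. It is non-constant only across edges incident to the collar $B_{2r}(x)\setminus B_r(x)$, of which there are at most $\Delta\,V(x,2r)\le 4\Delta c_2 r^2$ by (VG(2)) and bounded degree; hence $\cE_{\Omega}(\psi,\psi)\le 4\Delta c_2=:C$ and $\mathrm{Cap}_{\Omega}(A)\le C$. (PI plays no role here; VG(2) and bounded degree suffice.) For the edge-boundary bound it is enough that the annulus $B_{2r}(x)\setminus B_r(x)$ is nonempty: since $G$ is infinite and connected while $B_{r+1}(x)$ is finite, there is a vertex $w$ with $d(x,w)=r+1\le 2r$ (using $r\ge1$), and on a geodesic from $x$ to $w$ the consecutive vertices $v_r,v_{r+1}$ satisfy $d(x,v_r)=r$, $d(x,v_{r+1})=r+1$, so $\{v_r,v_{r+1}\}\in\partial_E^{\Omega}A$ and $|\partial_E^{\Omega}A|\ge 1$. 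Combining,
\[
  \mathrm{Cap}_{\Omega}\big(A,\Omega\setminus A\big)=|\partial_E^{\Omega}A|\ge 1\ge C^{-1}\,\mathrm{Cap}_{\Omega}(A)\ge\theta\,\mathrm{Cap}_{\Omega}(A)
\]
for all $x$ and all $r\ge1$, which is the claim.

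Reading the condition exactly as displayed in \Cref{def:CDC}, I do not expect a genuine obstacle: the only points needing any care are (i) fixing the convention for the single-set capacity $\mathrm{Cap}_{B_{2r}(x)}(B_r(x))$ — any of the usual variants (condenser capacity of $B_r(x)$ against $\partial B_{2r}(x)$, with or without the boundary edges counted) differs from the others by an edge-boundary term that is $O(1)$ in effective dimension $2$, so the bound $\le C$ is convention-independent — and (ii) the observation that (VG(2)) together with connectedness of $G$ forbids the annulus $B_{2r}(x)\setminus B_r(x)$ from being empty. If one additionally wants the two-sided estimate $\mathrm{Cap}_{B_{2r}(x)}(B_r(x))\ge c>0$ (convenient in the later potential-theoretic applications), that lower bound is precisely where VD+PI, equivalently PHI, enters, via the standard annulus-capacity estimate in dimension $2$; but it is not needed for the inequality in \Cref{def:CDC} itself.
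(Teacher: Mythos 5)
Your argument is technically valid for Definition~\ref{def:CDC} \emph{as literally written}, but precisely for that reason it should make you suspicious rather than satisfied: you never invoke PI or PHI, only VG(2) and bounded degree, whereas the lemma asserts CDC as a consequence of PHI and the paper proves it by citing Barlow--Bass \cite[Prop.~3.5]{BarlowBass04}, where the implication PHI $\Rightarrow$ CDC is genuinely non-trivial. What you have in fact discovered is that Definition~\ref{def:CDC} is degenerate as stated. The quantifier ``for every $x\in H$ and $r$ with $B_{2r}(x)\subseteq H$'' restricts attention to balls lying entirely in the interior of $H$, and the capacities are computed intrinsically on $B_{2r}(x)$, so — as you observe at the outset — the inequality ``does not involve $H$ at all''. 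A capacity density condition that does not see $\partial H$ cannot ``rule out thorns'' or ``ensure quantitative boundary regularity'' as the definition's own gloss claims; indeed the unused variable $U\subseteq H$ in the definition's preamble is a further sign that something got lost in transcription. The intended condition (the one in Barlow--Bass and Barlow--Bass--Kumagai, and the one actually needed for the boundary Harnack inequality and the ground-state $L^\infty$ bound in \Cref{lem:IU_derivation}) quantifies over points $x$ at or near $\partial H$ and compares the capacity of $B_r(x)\cap H^{c}$ to that of $B_r(x)$, so that the complement of $H$ has uniformly positive capacity density along the boundary. That statement genuinely requires PHI (for the two-sided annulus capacity estimates, which you correctly identify in your closing paragraph as ``precisely where VD+PI enters''), and it is what the rest of the paper uses.

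Your computation itself is fine: since $B_r(x)$ and $B_{2r}(x)\setminus B_r(x)$ partition $B_{2r}(x)$, the condenser capacity collapses to the edge-boundary count, the tent function gives $\mathrm{Cap}_{B_{2r}(x)}(B_r(x))\lesssim\Delta c_2$ under VG(2), and connectedness plus $r\ge1$ forces $|\partial_E^{\Omega}A|\ge1$. But a lemma whose conclusion you can reach without ever touching the Poincar\'e inequality is not the lemma that makes the IU machinery in \Cref{lem:IU_derivation} run. The right conclusion from your work is that Definition~\ref{def:CDC} needs to be corrected (replace the interior constraint $B_{2r}(x)\subseteq H$ by $x\in\partial H$ and replace the first argument $B_r(x)$ of the left-hand capacity by $B_r(x)\cap H^{c}$, with the condenser taken against $B_{2r}(x)^{c}$), after which the paper's citation-based proof is the appropriate one and your elementary argument no longer applies.
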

\begin{proof}[Reference]
See \cite[Prop.~3.5]{BarlowBass04} for PHI $\Rightarrow$ CDC on balls; compare also \cite{BarlowBassKumagai09}.
\end{proof}

\begin{lemma}[IU for balls under VD+PI]\label{lem:IU_derivation}
Let $G$ satisfy VD+PI. Let $H=B_R(x_0)$ be a metric ball. Let $\lambda_1(H)$ be the smallest eigenvalue of $\LL_H$ and $\phi_1(H)$ the corresponding $L^2(m)$-normalized eigenfunction (see \Cref{rem:measures_operators}). The following hold uniformly in $R$ and $x_0$:
\begin{enumerate}
    \item \textbf{Faber--Krahn Inequality (FK):} $\lambda_1(H) \gtrsim R^{-2}$.
    \item \textbf{CDC for balls:} $H$ satisfies CDC uniformly \textup{(}\Cref{lem:CDC-balls}\textup{)}.
    \item \textbf{Ground-state control:} $\|\phi_1(H)\|_{\infty}^2 \lesssim m(H)^{-1}$.
    \item \textbf{Intrinsic ultracontractivity (IU):} There exists $c_0>0$ such that for all $t\ge c_0 R^2$,
    \[
    \sup_{v \in V(H)} p_t^H(v,v) \lesssim \frac{1}{m(H)} e^{-\lambda_1(H) t}.
    \]
\end{enumerate}
\end{lemma}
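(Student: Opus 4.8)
The plan is to dispatch the four claims in the order (1), (2), (3), (4), establishing first the two-sided estimate $\lambda_1(H)\asymp R^{-2}$, since it is the common input to the last two parts. For (1), I would invoke the relative Faber--Krahn inequality, which is one of the standard consequences of VD$+$PI recorded in \Cref{rem:chain}: for every $\Omega\subseteq B_R(x_0)$ one has $\lambda_1(\Omega)\gtrsim R^{-2}\,V(x_0,R)/|\Omega|$ (the exponent is $2/\nu$ with $\nu=2$ here), and specialising to $\Omega=H=B_R(x_0)$ gives $\lambda_1(H)\gtrsim R^{-2}$. The complementary bound $\lambda_1(H)\lesssim R^{-2}$ comes from testing the Rayleigh quotient of $\LL_H$ against the tent function $f(y)=(1-d(x_0,y)/R)_+$: bounded degree yields $\cE_H(f,f)\lesssim V(x_0,R)\,R^{-2}$, while VD gives $\|f\|_{L^2(m)}^2\gtrsim m\big(B_{R/2}(x_0)\big)\gtrsim V(x_0,R)$, hence $\lambda_1(H)\le \cE_H(f,f)/\|f\|_{L^2(m)}^2\lesssim R^{-2}$. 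Thus $\lambda_1(H)\asymp R^{-2}$ uniformly in $(x_0,R)$. Claim (2) is exactly \Cref{lem:CDC-balls}, so nothing further is needed there.

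For the ground-state bound (3) I would use $\LL_H\phi_1=\lambda_1\phi_1$ to write $\phi_1=e^{\lambda_1 R^2}\,e^{-R^2\LL_H}\phi_1$. Cauchy--Schwarz in $L^2(m)$ and the Chapman--Kolmogorov identity then give
\[
\|\phi_1\|_\infty\le e^{\lambda_1 R^2}\,\sup_{x}\Big(\sum_y p^H_{R^2}(x,y)^2\,m(y)\Big)^{1/2}\|\phi_1\|_{L^2(m)}=e^{\lambda_1 R^2}\,\sup_{x}p^H_{2R^2}(x,x)^{1/2}.
\]
Domination of the Dirichlet kernel, $p^H_{2R^2}\le p_{2R^2}$, together with the Gaussian upper bound of \Cref{prop:maximal} and VG(2), gives $p^H_{2R^2}(x,x)\lesssim R^{-2}\asymp m(H)^{-1}$; and $e^{\lambda_1 R^2}\lesssim 1$ by the upper bound on $\lambda_1(H)$ just obtained. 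Hence $\|\phi_1\|_\infty^2\lesssim m(H)^{-1}$.

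For the intrinsic ultracontractivity estimate (4) I would run a ``square-root-of-time'' argument. Fix $t\ge 2R^2$ and write $t=2R^2+u$ with $u\ge 0$. Put $f:=p^H_{R^2}(v,\cdot)$; applying Chapman--Kolmogorov twice identifies $p^H_t(v,v)=\langle f,\,e^{-u\LL_H}f\rangle_{L^2(m)}$, and the spectral theorem for $\LL_H$ yields $\langle f,e^{-u\LL_H}f\rangle_{L^2(m)}\le e^{-\lambda_1 u}\|f\|_{L^2(m)}^2=e^{-\lambda_1 u}\,p^H_{2R^2}(v,v)$. As in the previous step $p^H_{2R^2}(v,v)\lesssim m(H)^{-1}$, while $e^{-\lambda_1 u}=e^{-\lambda_1 t}e^{2\lambda_1 R^2}\lesssim e^{-\lambda_1 t}$ since $\lambda_1 R^2\lesssim 1$. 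Therefore $\sup_{v\in V(H)}p^H_t(v,v)\lesssim m(H)^{-1}e^{-\lambda_1(H)t}$ for all $t\ge c_0R^2$ with $c_0=2$, which is (4). (This is precisely the on-diagonal form of IU; equivalently, the Doob $h$-transform of the killed walk by $\phi_1$ has a transition density bounded above for times $\gtrsim R^2$, which is the ground-state-transform picture mentioned in \Cref{rem:chain}.)

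I expect no single deep obstacle here: once the VD$+$PI toolbox (Delmotte's PHI, two-sided Gaussian bounds, relative Faber--Krahn, CDC on balls) is granted, the three computations are short and essentially self-contained. The point that genuinely requires care is \emph{uniformity}: every estimate used --- the Gaussian upper bound, the relative Faber--Krahn inequality, PHI, and the CDC of \Cref{lem:CDC-balls} --- holds with constants depending only on $(\mathrm{VD},\mathrm{PI},\Delta)$, and it is exactly this uniformity that must be tracked so that the final bounds in (1)--(4) are uniform in $x_0$ and $R$. A secondary item to verify is the absence of circularity: the heat-kernel facts invoked ($p^H_s\le p_s$ and $p_s(x,x)\lesssim s^{-1}$) are used only for $s\lesssim R^2$, well before the regime $t\ge c_0R^2$ in which IU is asserted, so the argument does not implicitly presuppose (4).
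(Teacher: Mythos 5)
Your proposal is correct, and it takes a genuinely different and more self-contained route than the paper, whose proof of this lemma is a reference-heavy sketch. The paper cites boundary Harnack inequalities (via PHI $+$ CDC and \cite{BarlowBassKumagai09}) to obtain the ground-state bound (3) and then invokes the ground-state transform method for (4). You instead derive (3) and (4) directly from the short-time on-diagonal Gaussian bound together with the spectral theorem: the identity $\|\phi_1\|_\infty\le e^{\lambda_1 R^2}\sup_x h^H_{2R^2}(x,x)^{1/2}$ (semigroup smoothing and Cauchy--Schwarz) gives (3), and the ``square-root-of-time'' decomposition $h^H_t(v,v)=\langle h^H_{R^2}(v,\cdot),e^{-u\LL_H}h^H_{R^2}(v,\cdot)\rangle_{L^2(m)}$ with $u=t-2R^2$ gives (4), using only $h^H_{2R^2}(v,v)\lesssim m(H)^{-1}$ and the two-sided bound $\lambda_1\asymp R^{-2}$. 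This bypasses boundary Harnack and CDC entirely in parts (3)--(4) --- you use CDC only to \emph{state} (2), not to prove anything --- and it makes explicit a point the paper's sketch glosses over: the paper says ``(4) follows from spectral decomposition combined with (3),'' but one must also control the higher modes, which is exactly what the square-root-of-time trick does, and in fact what is really used is short-time ultracontractivity rather than the ground-state $L^\infty$ bound per se. You also correctly identified that the complementary estimate $\lambda_1(H)\lesssim R^{-2}$ (tent-function Rayleigh quotient) is needed to absorb the prefactors $e^{\lambda_1 R^2}$, a point the paper does not spell out. Two small bookkeeping items are harmless but worth noting: since the walk is in discrete time, $e^{-s\LL_H}$ should be read as $(I-\LL_H)^s=P_H^s$, and the $L^2(m)$ norm of the kernel should really involve the symmetrised density $h^H_s(x,y)=p^H_s(x,y)/m(y)$ (your $\sum_y p^H_{R^2}(x,y)^2\,m(y)$ differs by $m$-factors); bounded degree makes both points constant-factor corrections.
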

\begin{proof}[Sketch of proof and references]
The implications rely on the robust analytic framework established by VD+PI, which implies PHI (\Cref{thm:Delmotte}).

(1) FK under VD+PI is standard (see \cite[Ch. 8]{Grigoryan09} or \cite[Prop. 5.1]{BarlowBass04}).

(2) PHI implies that metric balls are sufficiently regular: CDC for balls holds uniformly; see \Cref{lem:CDC-balls}.

(3) PHI+CDC yield elliptic and boundary Harnack inequalities; these give uniform control of $\phi_1$, in particular $\|\phi_1\|_\infty^2\lesssim m(H)^{-1}$. See \cite[Thm.~4.5]{BarlowBassKumagai09}; compare also \cite{BassKumagai08}.

(4) IU follows by spectral decomposition combined with (3): for large $t$, the principal mode dominates and the ground-state $L^\infty$-bound supplies the $1/m(H)$ factor. See e.g.\ the ground-state transform method in \cite{Grigoryan09} and references therein.
\end{proof}

We use the following proposition, which restates the key results from \Cref{lem:IU_derivation} using the counting measure (justified by \Cref{rem:measures_operators} since $m(H) \asymp |V(H)|$).

\begin{proposition}[Faber--Krahn and IU on Balls]\label{prop:IU}
Let $H=B_R(x_0)$ be a metric ball. Under VG(2)+PI:
\begin{enumerate}
    \item \textbf{(Faber--Krahn)} There exists $c_{FK}>0$ such that $\lambda_1(H)\ge c_{FK}/R^{2}$.
    \item \textbf{(Intrinsic ultracontractivity)} There exists $C_{IU}>0$ such that if $t\ge R^{2}$ then
    \begin{equation}\label{eq:IU}
        \sup_{v \in V(H)} p_t^H(v,v) \leq \frac{C_{IU}}{|V(H)|} e^{-\lambda_1(H) t}.
    \end{equation}
\end{enumerate}
All constants depend only on the structural parameters (VG(2), PI, $\Delta$).
\end{proposition}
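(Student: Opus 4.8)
The plan is to deduce \Cref{prop:IU} from \Cref{lem:IU_derivation} after a measure change, and to give direct derivations of the two displayed inequalities so that the statement is self-contained modulo VD+PI (equivalently PHI, \Cref{thm:Delmotte}) and the Gaussian/maximal estimates collected in \Cref{prop:maximal}. First, \Cref{prop:IU} is exactly the counting-measure form of parts (1) and (4) of \Cref{lem:IU_derivation}: bounded degree gives $m(H)=\sum_{x\in H}\deg(x)\asymp|V(H)|$, and by \Cref{rem:measures_operators} the Dirichlet operator $\LL_H$ has the same spectrum on $\ell^2(m|_H)$ as on $\ell^2(|\cdot||_H)$, so the Faber--Krahn bound $\lambda_1(H)\ge c_{FK}/R^2$ is measure-independent, while the factor $m(H)^{-1}$ in the IU estimate may be traded for $|V(H)|^{-1}$ at the cost of a constant depending only on $\Delta$. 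All uniformity in $(R,x_0)$ below is inherited from the spatial uniformity of PHI and of \Cref{prop:maximal}.

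For (1), I would run the classical argument extracting a lower bound on the bottom Dirichlet eigenvalue from (PI). Let $f=\phi_1(H)$ be the $\ell^2(m)$-normalised ground state of $\LL_H$ with $H=B_R(x_0)$, extended by $0$ off $H$; then $\cE(f,f)\asymp\lambda_1(H)$ and the counting-measure norm satisfies $\|f\|_2^2\asymp1$. By (VG(2)), fix an integer $K\ge1$ depending only on the volume constants with $|B_R(x_0)|/|B_{KR}(x_0)|\le c_2/(c_1K^2)\le\tfrac12$. Apply (PI) on $B_{KR}(x_0)$ to $f$: since $f$ is supported in $B_R\subset B_{2KR}$, the right-hand energy $\cE_{B_{2KR}}(f,f)$ equals the full $\cE(f,f)\asymp\lambda_1(H)$, while Cauchy--Schwarz together with the volume ratio gives $|B_{KR}|\,\bar f_{B_{KR}}^{\,2}\le\tfrac12\|f\|_2^2$, hence $\sum_{B_{KR}}(f-\bar f_{B_{KR}})^2\ge\tfrac12\|f\|_2^2$. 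Combining, $\|f\|_2^2\lesssim K^2R^2\lambda_1(H)\|f\|_2^2$, i.e. $\lambda_1(H)\gtrsim R^{-2}$. (Alternatively this is immediate from the relative Faber--Krahn inequality, which is equivalent to VD+PI, applied with $U=B=B_R(x_0)$; see \cite[Ch.~8]{Grigoryan09} and \cite{BarlowBass04}.)

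For (2), I would combine the on-diagonal Gaussian bound with a one-line spectral argument that reserves a diffusive slice of time. Writing the spectral decomposition $p_t^H(v,v)\asymp\sum_k e^{-\lambda_k(H)t}\phi_k(v)^2$, fix $t\ge R^2$ and put $s=R^2/2$, so $t-s\ge R^2/2>0$. Bounding $e^{-\lambda_k(H)(t-s)}\le e^{-\lambda_1(H)(t-s)}$ term by term gives
\[
  p_t^H(v,v)\ \lesssim\ e^{-\lambda_1(H)(t-s)}\sum_k e^{-\lambda_k(H)s}\phi_k(v)^2\ \asymp\ e^{-\lambda_1(H)(t-s)}\,p_s^H(v,v).
\]
Now $p_s^H(v,v)\le p_s(v,v)\lesssim s^{-1}\asymp R^{-2}\asymp|V(H)|^{-1}$ by the specialised Gaussian bound of \Cref{prop:maximal} under (VG(2)), and $e^{-\lambda_1(H)(t-s)}=e^{-\lambda_1(H)t}\,e^{\lambda_1(H)R^2/2}\le C\,e^{-\lambda_1(H)t}$ because a tent-function test (insert $f=(1-d(x_0,\cdot)/R)_+$ into the Rayleigh quotient of $\LL_H$) yields $\lambda_1(H)R^2\lesssim1$. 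Hence $\sup_{v\in V(H)}p_t^H(v,v)\le C_{IU}|V(H)|^{-1}e^{-\lambda_1(H)t}$ for all $t\ge R^2$. This matches \Cref{lem:IU_derivation}(4); there the factor $|V(H)|^{-1}$ comes from $\|\phi_1\|_\infty^2\lesssim m(H)^{-1}$, and here from heat-kernel smoothing over the reserved time $s$ — so part (3) of \Cref{lem:IU_derivation} is not even required for this route.

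The only genuine care point is bookkeeping uniformity rather than a deep difficulty: $K$, the ultracontractivity constant, and the tent-function constant must all depend only on $(\mathrm{VG(2)},\mathrm{PI},\Delta)$, which holds because the Gaussian/maximal bounds of \Cref{prop:maximal} (hence PHI) are spatially uniform under the standing hypotheses and the $m$-versus-counting change costs only $\Delta$-dependent factors (\Cref{rem:measures_operators}). Balls of bounded radius are dispatched trivially by enlarging $C_{IU}$ and $1/c_{FK}$, since there $|V(H)|$ is bounded above and $\lambda_1(H)$ bounded below by structural constants.
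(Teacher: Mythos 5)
Your proof is correct, and for part (2) it takes a genuinely different and more elementary route than the paper's. The paper (via \Cref{lem:IU_derivation}) goes VD+PI $\Rightarrow$ PHI $\Rightarrow$ CDC on balls $\Rightarrow$ boundary Harnack $\Rightarrow$ ground-state $L^\infty$ bound $\|\phi_1\|_\infty^2\lesssim m(H)^{-1}$, then inserts this into the spectral decomposition to obtain the IU-type tail bound. You instead reserve a diffusive slice of time $s\asymp R^2$, use the clean semigroup inequality $p_t^H(v,v)\le (1-\lambda_1)^{t-s}\,p_s^H(v,v)$ (valid because the lazy transition matrix $P_H$ has nonnegative spectrum, so $1-\lambda_k\in[0,1-\lambda_1]$), dominate $p_s^H\le p_s\lesssim s^{-1}\asymp|V(H)|^{-1}$ by the unrestricted Gaussian on-diagonal bound, and absorb $e^{\lambda_1 s}$ into the constant via the tent-function upper bound $\lambda_1(H)R^2\lesssim 1$. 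This bypasses CDC, boundary Harnack, and any control on $\phi_1$ altogether: for the specific inequality stated in the proposition (which is an asymptotic $t\ge R^2$ bound, not full IU at all times), your route is strictly leaner. The trade-off is that the paper's derivation yields more — a uniform bound on $\|\phi_1\|_\infty$ and hence IU at all sufficiently large $t$ in the stronger bivariate form $p_t^H(x,y)\lesssim e^{-\lambda_1 t}\phi_1(x)\phi_1(y)$ — which could be useful for the second-order questions raised in the discussion, whereas your argument gives exactly the on-diagonal tail bound needed for \Cref{thm:main} and nothing more. Two minor bookkeeping remarks: the spectral decomposition in the discrete-time setting should read $p_t^H(v,v)=m(v)\sum_k(1-\lambda_k(H))^t\phi_k(v)^2$ rather than $\asymp\sum_k e^{-\lambda_k t}\phi_k(v)^2$ (the two are not uniformly comparable when some $\lambda_k$ is near $1$), but your actual chain $(1-\lambda_k)^{t}\le(1-\lambda_1)^{t-s}(1-\lambda_k)^s$ and $(1-\lambda_1)^{t-s}\le e^{-\lambda_1(t-s)}$ is exact and unaffected; and $s$ should be taken as $\lfloor R^2/2\rfloor$. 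Your FK argument via PI on an enlarged ball is also fine and agrees with the standard reference cited in the paper.
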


\section{Interior--Boundary Decomposition and Volume Estimates}\label{sec:decomposition}

We analyze an exhaustion by metric balls $G_n = B_{R_n}(x_0)$. Let $N_n = |V(G_n)|$. By VG(2), $N_n \asymp R_n^2$.

We decompose $V(G_n)$ to isolate boundary effects.

We fix a parameter $\eta \in (0, \tfrac{1}{2})$. This restriction ensures $1-2\eta>0$, required for the analysis in \Cref{sec:lower_bound}. Define the \emph{interior} $I_n$ and the \emph{boundary layer} $E_n$. Let the buffer width be $W_n = R_n^{1-\eta}$.
\begin{align*}
I_n &:= \{x \in V(G_n) : d_G(x, V \setminus V(G_n)) > W_n\}, \\
E_n &:= V(G_n) \setminus I_n.
\end{align*}

\paragraph{Intuition and quantitative size of the boundary layer.} The strategy is to ensure that for vertices in the interior $I_n$, the random walk with high probability remains inside $G_n$ throughout the time window producing the logarithmic growth. We analyse the walk up to $T_n:=R_n^{2(1-2\eta)}$; the typical displacement over this window is of order $R_n^{1-2\eta}$ whereas the buffer width is $W_n=R_n^{1-\eta}$ so $R_n^{1-2\eta}/W_n=R_n^{-\eta}\to0$. Thus interior vertices behave (up to negligible error) like vertices in the full graph for times $\le T_n$. The boundary layer occupies only $O(N_n^{1-\eta/2})$ vertices (\Cref{lem:boundary_volume}) and its cumulative contribution is strictly subleading.

\begin{lemma}[Boundary Layer Volume]\label{lem:boundary_volume}
Under VG(2), for the choice $W_n = R_n^{1-\eta}$, we have
\[
    |E_n| = O\bigl(N_n^{1-\eta/2}\bigr).
\]
\end{lemma}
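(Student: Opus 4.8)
The plan is to cover the boundary layer $E_n$ by a bounded number of balls of radius comparable to $W_n$, centred on a maximal $W_n$-separated net of the topological boundary $\partial G_n$, and then bound the volume of each such ball by VG(2) while bounding the cardinality of the net itself by a volume/packing argument.

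First I would record the geometric containment: every $x\in E_n$ satisfies $d_G(x,V\setminus V(G_n))\le W_n$, so $x$ lies within distance $W_n+O(1)$ of a vertex $y_x\in\partial G_n:=\{y\in G_n:\ y\sim z\text{ for some }z\notin G_n\}$. Hence $E_n\subseteq\bigcup_{y\in\partial G_n} B_{W_n+1}(y)$. Next, let $\mathcal{N}\subseteq\partial G_n$ be a maximal $W_n$-separated subset; by maximality the balls $\{B_{W_n}(z)\}_{z\in\mathcal N}$ cover $\partial G_n$, so $E_n\subseteq\bigcup_{z\in\mathcal N} B_{2W_n+1}(z)$. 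Applying the upper bound in VG(2) gives $|B_{2W_n+1}(z)|\le c_2(2W_n+1)^2\lesssim W_n^2$ for each $z$, so $|E_n|\lesssim |\mathcal N|\,W_n^2$.

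It remains to bound $|\mathcal N|$. The balls $\{B_{W_n/2}(z)\}_{z\in\mathcal N}$ are pairwise disjoint (by $W_n$-separation) and all contained in the annulus $A_n:=B_{R_n}(x_0)\setminus B_{R_n-W_n-1}(x_0)$, since each $z\in\partial G_n$ is within distance $1$ of the complement of $G_n$, hence at distance $\ge R_n-1$ from $x_0$. Using the lower bound in VG(2), $|B_{W_n/2}(z)|\gtrsim W_n^2$, and using VG(2) twice on the annulus, $|A_n|\le c_2 R_n^2 - c_1(R_n-W_n-1)^2\lesssim R_n W_n$ (since $W_n=R_n^{1-\eta}=o(R_n)$, the difference of squares is $\asymp R_n W_n$). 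Disjointness then forces $|\mathcal N|\cdot W_n^2\lesssim R_n W_n$, i.e. $|\mathcal N|\lesssim R_n/W_n=R_n^{\eta}$. Combining, $|E_n|\lesssim R_n^{\eta}\cdot W_n^2=R_n^{\eta}R_n^{2-2\eta}=R_n^{2-\eta}\asymp N_n^{1-\eta/2}$, as claimed.

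The main obstacle — really the only subtle point — is the annulus volume estimate $|A_n|\lesssim R_n W_n$: one must use the \emph{two-sided} VG(2) bounds (upper on $B_{R_n}$, lower on $B_{R_n-W_n-1}$) and the algebraic identity $R^2-(R-w)^2=w(2R-w)\asymp Rw$ valid when $w=o(R)$, which holds here because $\eta>0$. If instead one only had $|B_R|\asymp R^2$ up to constants that could differ between the two radii, the difference estimate would degrade to $|A_n|\lesssim R_n^2$ and yield only the trivial bound; the uniform constants $c_1,c_2$ in VG(2) are exactly what make the packing argument produce the saving $N_n^{-\eta/2}$.
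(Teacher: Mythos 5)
Your proposal follows the same essential route as the paper: contain $E_n$ in the annulus $B_{R_n}(x_0)\setminus B_{R_n-W_n}(x_0)$ and then bound the annulus volume via VG(2). The extra packing layer — a maximal $W_n$-separated net $\mathcal{N}\subseteq\partial G_n$ — is superfluous: your bound on $|\mathcal{N}|$ already invokes the annulus volume estimate, and multiplying $|\mathcal{N}|\lesssim R_n/W_n$ by the per-ball volume $W_n^2$ simply reproduces $R_n W_n$. The paper's direct containment followed by a single volume bound is shorter and equivalent. Also note a minor slip: the disjoint balls $B_{W_n/2}(z)$ with $z\in\partial G_n$ are \emph{not} contained in $B_{R_n}(x_0)\setminus B_{R_n-W_n-1}(x_0)$ (they stick outside $B_{R_n}(x_0)$); one needs a slightly fattened annulus, which is harmless but should be said.

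The more serious issue is the pivotal step $c_2 R_n^2 - c_1(R_n-W_n-1)^2\lesssim R_n W_n$, which appears in your argument exactly as it does in the paper's proof, and which does not follow from VG(2) when $c_1<c_2$: the expansion has leading term $(c_2-c_1)R_n^2$, which dominates $R_n W_n$ since $W_n=o(R_n)$. Your closing remark misdiagnoses the obstacle — uniformity of $c_1,c_2$ over $x$ and $R$ does not make $c_1=c_2$, and the identity $R^2-(R-W)^2\asymp RW$ requires a \emph{common} constant in front of both terms. The ingredient that actually rescues the lemma is an annular-decay estimate for doubling length spaces (Buckley): there is $\alpha\in(0,1]$, depending only on the doubling constant, such that $|B_R(x)\setminus B_{(1-\epsilon)R}(x)|\lesssim \epsilon^{\alpha}|B_R(x)|$. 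Taking $\epsilon=W_n/R_n=R_n^{-\eta}$ yields $|E_n|=O(N_n^{1-\eta\alpha/2})$, which is $o(N_n)$ and is all that is needed in \Cref{cor:lower}. The specific exponent $\eta/2$ corresponds to $\alpha=1$, which VG(2) alone does not guarantee; your proof, like the paper's, should be read with this caveat, and the downstream arguments are unaffected.
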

\begin{proof}
Set $R=R_n$ and $W=W_n=R^{1-\eta}$. If $v\in E_n$, there exists $y\notin B_R(x_0)$ with $d(v,y)\le W$, hence
$d(x_0,v)\ge R-W$ by the triangle inequality; thus $E_n\subseteq B_R(x_0)\setminus B_{R-W}(x_0)$. Using the \emph{uniform} VG(2) bounds,
\begin{align*}
|B_R(x_0)\setminus B_{R-W}(x_0)| &\le c_2 R^2 - c_1 (R-W)^2 \\
  &= (c_2-c_1)R^2 + 2c_1 R W - c_1 W^2 \\
  &\lesssim R W, \quad \text{since $W=o(R)$ for $\eta>0$.}
\end{align*}
With $W=R^{1-\eta}$ we obtain $|E_n|\lesssim R^{2-\eta}\asymp (R^2)^{1-\eta/2} \asymp N_n^{1-\eta/2}$.
All implicit constants depend only on the structural parameters in VG(2).
\end{proof}

\section{Lower Bound Analysis}\label{sec:lower_bound}

We establish the lower bound by showing that for interior vertices, the killed walk behaves like the unrestricted walk for a sufficiently long time.
Recall that we fixed $\eta \in (0, 1/2)$.

\begin{lemma}\label{lem:lower}
For the fixed $\eta \in (0,1/2)$, there exists a constant $C_1 > 0$ such that for all $v \in I_n$,
\[
G_{G_n}(v,v) \geq 2\cG(1-2\eta)\log R_n - C_1.
\]
\end{lemma}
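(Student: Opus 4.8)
The plan is to show that for an interior vertex $v\in I_n$, the killed Green function $G_{G_n}(v,v)$ captures essentially the same logarithmic mass as the full-space return-probability sum $\sum_{t\le T_n} p_t(v,v)$, where $T_n=R_n^{2(1-2\eta)}$ is the interior time scale. First I would truncate: since $G_{G_n}(v,v)=\sum_{t\ge0}p_t^{G_n}(v,v)\ge \sum_{t=1}^{T_n}p_t^{G_n}(v,v)$, it suffices to lower bound the truncated sum. Next, for each fixed $t\le T_n$ I compare the killed kernel to the free kernel via the standard decomposition $p_t(v,v)=p_t^{G_n}(v,v)+\Prob_v[\tau_{G_n}\le t,\,X_t=v]$, so that $p_t^{G_n}(v,v)\ge p_t(v,v)-\Prob_v[\tau_{G_n}\le t]$. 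For $v\in I_n$ the event $\{\tau_{G_n}\le t\}$ forces the walk to travel distance $\ge W_n=R_n^{1-\eta}$ within time $t\le T_n$, so the maximal inequality \eqref{eq:max_ineq} from \Cref{prop:maximal} gives
\[
\Prob_v[\tau_{G_n}\le T_n]\ \le\ \Prob_v\Big(\max_{0\le s\le T_n} d_G(v,X_s)\ge W_n\Big)\ \le\ C_M\exp\!\Big(-c_M\frac{W_n^2}{T_n+1}\Big)\ \le\ C_M\exp\!\big(-c_M R_n^{2\eta}/2\big),
\]
using $W_n^2/T_n = R_n^{2(1-\eta)}/R_n^{2(1-2\eta)}=R_n^{2\eta}$. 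This is super-polynomially small in $R_n$.

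Summing the comparison over $t=1,\dots,T_n$ yields
\[
\sum_{t=1}^{T_n} p_t^{G_n}(v,v)\ \ge\ \sum_{t=1}^{T_n} p_t(v,v)\ -\ T_n\cdot C_M\exp\!\big(-c_M R_n^{2\eta}/2\big).
\]
The error term is $T_n\,e^{-c_M R_n^{2\eta}/2}=R_n^{2(1-2\eta)}e^{-c_M R_n^{2\eta}/2}\to 0$, hence it is $O(1)$ (indeed $o(1)$), uniformly in $v\in I_n$ and $n$. For the main term I invoke the ``master'' return-sum identity \eqref{eq:return-sum}: with $R=\sqrt{T_n}=R_n^{1-2\eta}$ we get $\sum_{t=1}^{T_n} p_t(v,v)=\cG\log T_n+O(1)=2\cG(1-2\eta)\log R_n+O(1)$, where the $O(1)$ is uniform in $v$ by the uniformity built into \Cref{ass:QH}. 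Combining the two displays gives $G_{G_n}(v,v)\ge 2\cG(1-2\eta)\log R_n - C_1$ for a constant $C_1$ depending only on the structural data and on $\eta$, as claimed.

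The only mild subtlety — and the step I would be most careful about — is making sure every error term is genuinely \emph{uniform in $v\in I_n$} and in $n$, rather than merely $O(1)$ for each fixed $v$. This is not hard here: the maximal-inequality constants $C_M,c_M$ are structural (\Cref{prop:maximal}), the bound $W_n^2/T_n\ge R_n^{2\eta}$ holds for every $v\in I_n$ by definition of the interior, and the $O(1)$ in the return-sum identity \eqref{eq:return-sum} is uniform in the base point by \Cref{ass:QH}. A second minor point is the low-order truncation $\sum_{t=1}^{t_0-1}p_t(v,v)\ge 0$, which only helps the lower bound and needs no estimate. One could alternatively use the Gaussian upper bound of \Cref{prop:maximal}(1) together with a crude union bound over the boundary to estimate $\Prob_v[\tau_{G_n}\le t]$, but the maximal inequality is cleaner and already packaged for exactly this purpose.
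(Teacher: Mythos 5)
Your proof is correct and follows essentially the same route as the paper's: truncate the Green function at $T_n=R_n^{2(1-2\eta)}$, use the decomposition $p_t^{G_n}(v,v)\ge p_t(v,v)-\Prob_v[\tau\le t]$, control the exit probability via the maximal inequality of \Cref{prop:maximal} (getting a super-polynomially small error from $W_n^2/T_n = R_n^{2\eta}$), and evaluate the main term via \eqref{eq:return-sum}. Your attention to the uniformity of all constants in $v$ and $n$ matches the paper's emphasis exactly.
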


\begin{proof}
Let $\tau_{\partial} = \min\{t \geq 0 : X_t \notin V(G_n)\}$ be the exit time. We set the time horizon $T = \lfloor R_n^{2(1-2\eta)} \rfloor$. Since $\eta < 1/2$, $T \to \infty$.

We decompose the Green function:
\[
G_{G_n}(v,v) \geq \sum_{t=1}^{T} p_t^{G_n}(v,v).
\]

We use the standard relation between the killed kernel and the full kernel. Let $E_t$ be the event $\{X_t = v\}$.
\begin{align*}
p_t(v,v) &= \Prob_v(E_t) = \Prob_v(E_t, t < \tau_{\partial}) + \Prob_v(E_t, t \geq \tau_{\partial}) \\
&= p_t^{G_n}(v,v) + \Prob_v(E_t, t \geq \tau_{\partial}).
\end{align*}
Since $\Prob_v(E_t, t \geq \tau_{\partial}) \leq \Prob_v(t \geq \tau_{\partial})$, we obtain the inequality $p_t^{G_n}(v,v) \geq p_t(v,v) - \Prob_v(\tau_{\partial} \leq t)$. This yields:
\begin{equation}\label{eq:lower_decomp}
\sum_{t=1}^{T} p_t^{G_n}(v,v) \geq \sum_{t=1}^{T} p_t(v,v) - \sum_{t=1}^{T} \Prob_v(\tau_{\partial} \leq t).
\end{equation}

\textbf{Step 1: Main term.} Using the uniform heat kernel asymptotic sum \eqref{eq:master-sum} (which relies on \Cref{ass:QH}):
\begin{align*}
\sum_{t=1}^{T} p_t(v,v) &= \cG \log T + O(1)
  = \cG \log(R_n^{2(1-2\eta)}) + O(1) \\
&= 2\cG (1-2\eta)\log R_n + O(1).
\end{align*}

\textbf{Step 2: Error term (Exit probability).} Let $D = d_G(v, V \setminus V(G_n))$. Since $v \in I_n$, $D > R_n^{1-\eta}$. We use the Maximal Inequality (\Cref{prop:maximal}). For $t \leq T$:
\[
\Prob_v(\tau_{\partial} \leq t) \leq \Prob_v\bigl( \max_{0 \leq s \leq t} d_G(v,X_s) \geq D \bigr) \leq C_M \exp\Bigl(-c_M \frac{D^2}{t+1}\Bigr).
\]
We estimate the exponent. For large $n$, $T+1 \leq 2 R_n^{2(1-2\eta)}$.
\[
c_M \frac{D^2}{T+1} \geq c_M \frac{R_n^{2(1-\eta)}}{2 R_n^{2(1-2\eta)}} = \frac{c_M}{2} R_n^{2\eta}.
\]
Let $c' = c_M/2$. The total error term is bounded by:
\[
\sum_{t=1}^T \Prob_v(\tau_{\partial} \leq t) \leq (T+1) C_M \exp(-c' R_n^{2\eta}).
\]

Since $T = O(R_n^2)$ (in fact $T=O(R_n^{2(1-2\eta)})$), and $\eta>0$, this error term decays faster than any polynomial in $R_n$. This rapid decay confirms that the interior vertices are well protected from the boundary up to the time scale $T$.

Combining Step 1 and Step 2 in \eqref{eq:lower_decomp} proves the lemma.
\end{proof}

\begin{corollary}\label{cor:lower}
For any $\eta \in (0,1/2)$, there exists $C_2 > 0$ such that
\[
Z_n(1) \geq \cG(1-2\eta) N_n \log N_n - C_2 N_n.
\]
\end{corollary}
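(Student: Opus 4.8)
The plan is to sum the pointwise lower bound of \Cref{lem:lower} over the interior $I_n$ and then translate from the scale $R_n$ to the cardinality $N_n$. First I would note that the Dirichlet Green function is nonnegative on the diagonal (being $\sum_{t\ge0}p_t^{G_n}(v,v)\ge 0$), so discarding the contribution of the boundary layer $E_n$ is legitimate: $Z_n(1)=\sum_{v\in G_n}G_{G_n}(v,v)\ge\sum_{v\in I_n}G_{G_n}(v,v)$. Applying \Cref{lem:lower} to each $v\in I_n$ gives $Z_n(1)\ge |I_n|\,\bigl(2\cG(1-2\eta)\log R_n - C_1\bigr)$.

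Next I would control the cardinality. By \Cref{lem:boundary_volume}, $|E_n|=O(N_n^{1-\eta/2})$, so $|I_n| = N_n - O(N_n^{1-\eta/2}) = N_n\bigl(1 - O(N_n^{-\eta/2})\bigr)$. For the scale conversion, VG(2) gives $N_n\asymp R_n^2$, hence $\log N_n = 2\log R_n + O(1)$, equivalently $2\log R_n = \log N_n + O(1)$. Substituting,
\[
2\cG(1-2\eta)\log R_n - C_1 = \cG(1-2\eta)\bigl(\log N_n + O(1)\bigr) - C_1 = \cG(1-2\eta)\log N_n - O(1),
\]
where the $O(1)$ is uniform in $n$ (and absorbs $C_1$ and $\cG(1-2\eta)\cdot O(1)$). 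Therefore
\[
Z_n(1)\ \ge\ \bigl(N_n - O(N_n^{1-\eta/2})\bigr)\bigl(\cG(1-2\eta)\log N_n - O(1)\bigr).
\]

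Finally I would expand this product and collect error terms. The leading term is $\cG(1-2\eta)N_n\log N_n$. The cross terms are: $-O(N_n^{1-\eta/2})\cdot\cG(1-2\eta)\log N_n = O(N_n^{1-\eta/2}\log N_n)$, which is $o(N_n)$ since $\eta>0$; and $-N_n\cdot O(1) = O(N_n)$; and $O(N_n^{1-\eta/2})\cdot O(1)=O(N_n)$. All of these are absorbed into a single $-C_2 N_n$ term, yielding $Z_n(1)\ge \cG(1-2\eta)N_n\log N_n - C_2 N_n$, as claimed; the constant $C_2$ depends only on the structural data and the fixed $\eta$. There is no real obstacle here: the only point requiring a word of care is that the term $N_n^{1-\eta/2}\log N_n$ is genuinely subleading relative to $N_n$ (true because $N_n^{-\eta/2}\log N_n\to 0$ for any fixed $\eta>0$), so the entire discrepancy between the interior sum and the full sum can indeed be hidden in the $O(N_n)$ remainder. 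This is the step where one must not accidentally claim a bound uniform in $\eta$ — the implicit constant $C_2$ degrades as $\eta\downarrow 0$, which is exactly why the final squeezing argument that produces the sharp constant $\cG$ (rather than $\cG(1-2\eta)$) is deferred to \Cref{sec:discussion} and combined with the matching upper bound.
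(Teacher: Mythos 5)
Your proposal is correct and follows essentially the same route as the paper's own proof: sum \Cref{lem:lower} over $I_n$, control $|I_n|$ via \Cref{lem:boundary_volume}, convert scales using $N_n\asymp R_n^2$, and absorb the subleading $N_n^{1-\eta/2}\log N_n$ term into $O(N_n)$. Your explicit remarks on nonnegativity of the diagonal Green function and on the $\eta$-dependence of $C_2$ are welcome clarifications but do not change the argument.
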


\begin{proof}
We sum the bound of \Cref{lem:lower} over the interior $I_n$.
\[
Z_n(1) \geq \sum_{v \in I_n} G_{G_n}(v,v) \geq |I_n|\Bigl[2\cG(1-2\eta)\log R_n - C_1\Bigr].
\]
We relate the spatial scale $R_n$ to the volume $N_n$. Since $N_n \asymp R_n^2$ (VG(2)), taking logarithms yields $\log N_n = 2\log R_n + O(1)$. This scaling relation is characteristic of the dimension $d=2$. By \Cref{lem:boundary_volume}, $|I_n| = N_n - O(N_n^{1-\eta/2})$.

Substituting these estimates:
\begin{align*}
Z_n(1) &\geq \bigl[N_n - O(N_n^{1-\eta/2})\bigr]
          \Bigl[\cG(1-2\eta)\bigl(\log N_n + O(1)\bigr) - C_1\Bigr] \\
&= \cG(1-2\eta) N_n \log N_n + O(N_n \log N_n \cdot N_n^{-\eta/2}) - O(N_n) \\
&= \cG(1-2\eta) N_n \log N_n - O(N_n).\qedhere
\end{align*}
The $O(N_n \log N_n \cdot N_n^{-\eta/2})$ term is dominated by $O(N_n)$ since $\eta>0$.
\end{proof}

\section{Upper Bound Analysis}\label{sec:upper_bound}

The upper bound requires uniform control over the Green function, including vertices near the boundary. This relies crucially on Intrinsic Ultracontractivity.

\begin{lemma}\label{lem:upper}
There exists a constant $C_3 > 0$ such that for any $v \in V(G_n)$,
\[
G_{G_n}(v,v) \leq 2\cG \log R_n + C_3.
\]
\end{lemma}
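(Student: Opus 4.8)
The natural approach is to split the Green function sum at the diffusive time $R_n^2$ and bound the two pieces separately. Write $G_{G_n}(v,v)=\sum_{t=0}^\infty p_t^{G_n}(v,v)=\sum_{t=0}^{\lfloor R_n^2\rfloor} p_t^{G_n}(v,v)+\sum_{t>\lfloor R_n^2\rfloor} p_t^{G_n}(v,v)$. For the first (short-time) piece I would use the trivial domain monotonicity $p_t^{G_n}(v,v)\le p_t(v,v)$, which holds because killing on leaving $G_n$ only removes mass; then the master identity \eqref{eq:master-sum} gives $\sum_{t=1}^{\lfloor R_n^2\rfloor} p_t(v,v)=2\cG\log R_n+O(1)$ uniformly in $v$, and the $t=0$ term contributes $p_0^{G_n}(v,v)=1$. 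This already yields the stated leading term with a uniform additive constant, and is the easy half.

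\textbf{The tail.} The second (long-time) piece is where Intrinsic Ultracontractivity enters. By \Cref{prop:IU}, for all $t\ge R_n^2$ we have $p_t^{G_n}(v,v)\le C_{IU}|V(G_n)|^{-1}e^{-\lambda_1(G_n)t}$, and by Faber--Krahn $\lambda_1(G_n)\ge c_{FK}R_n^{-2}$. Summing the geometric-type series,
\[
\sum_{t>\lfloor R_n^2\rfloor} p_t^{G_n}(v,v)\;\le\;\frac{C_{IU}}{N_n}\sum_{t>\lfloor R_n^2\rfloor} e^{-c_{FK} t/R_n^2}\;\le\;\frac{C_{IU}}{N_n}\cdot\frac{C' R_n^2}{1}\;\lesssim\;\frac{R_n^2}{N_n}\;\asymp\;1,
\]
where the last comparison uses $N_n\asymp R_n^2$ from VG(2). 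So the tail contributes $O(1)$ uniformly in $v$, and adding the two pieces gives $G_{G_n}(v,v)\le 2\cG\log R_n+C_3$ with $C_3$ depending only on the structural data. One small point to be careful about: the sum $\sum_{t>\lfloor R_n^2\rfloor} e^{-c_{FK}t/R_n^2}$ is a geometric series with ratio $e^{-c_{FK}/R_n^2}$, so its value is $e^{-c_{FK}\lfloor R_n^2\rfloor/R_n^2}/(1-e^{-c_{FK}/R_n^2})$; since $1-e^{-x}\ge x/2$ for small $x$, the denominator is $\gtrsim R_n^{-2}$, giving the $O(R_n^2)$ bound claimed.

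\textbf{Main obstacle.} There is no deep obstacle here — the lemma is essentially a bookkeeping exercise once \Cref{prop:IU} and \eqref{eq:master-sum} are in hand. The one genuinely load-bearing input is IU: without it, the naive tail bound $p_t^{G_n}(v,v)\le e^{-\lambda_1 t}$ (missing the $1/N_n$ prefactor) would give $\sum_{t>R_n^2}e^{-\lambda_1 t}\lesssim R_n^2$, which is \emph{not} $O(1)$ and would inflate the bound to $2\cG\log R_n+O(R_n^2)$ — useless. So the care must go into invoking the correct statement of \Cref{prop:IU} (valid precisely for $t\ge R_n^2$, matching the split point) and into the elementary but necessary observation $N_n\asymp R_n^2$ that converts $R_n^2/N_n$ into a true constant. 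I would also note in passing that, unlike the lower-bound argument, no interior/boundary distinction is needed: domain monotonicity $p_t^{G_n}\le p_t$ holds for \emph{every} $v\in V(G_n)$, so the upper bound is automatically uniform over the whole ball, which is exactly what is required to later handle the boundary-layer vertices $E_n$ in the final assembly of \Cref{thm:main}.
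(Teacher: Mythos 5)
Your proof is correct and follows essentially the same route as the paper's: split at $T=\lfloor R_n^2\rfloor$, use domain monotonicity plus the master identity \eqref{eq:master-sum} for the short-time piece, and IU plus Faber--Krahn for the tail, with $N_n\asymp R_n^2$ converting the geometric-series bound into a uniform $O(1)$. The only cosmetic difference is that you substitute $\lambda_1\ge c_{FK}R_n^{-2}$ into the exponent before summing, whereas the paper keeps $\lambda_1$ symbolic and uses $1-e^{-\lambda_1}\ge\lambda_1/2$ and $N_n\lambda_1\gtrsim1$ at the end; the two computations are equivalent.
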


\begin{proof}
Let $v \in V(G_n)$. We split the Green function sum at the characteristic diffusive time scale $T = \lfloor R_n^2 \rfloor$:
\[
G_{G_n}(v,v) = \sum_{t=1}^{T} p_t^{G_n}(v,v) + \sum_{t > T} p_t^{G_n}(v,v) + p_0^{G_n}(v,v).
\]
Here $p_0^{G_n}(v,v)=1$ is absorbed into the final $O(1)$.

\textbf{Part 1 (short times, $t\le T$).} Using the domination $p_t^{G_n}(v,v)\le p_t(v,v)$ and the uniform sum \eqref{eq:return-sum} (relying on \Cref{ass:QH}):
\begin{align*}
\sum_{t=1}^{T} p_t^{G_n}(v,v) &\leq \sum_{t=1}^{T} p_t(v,v) = \cG \log T + O(1) \\
&= \cG \log(R_n^2) + O(1) = 2\cG \log R_n + O(1).
\end{align*}

\textbf{Part 2: Long time estimate ($t > T$).}

We utilize IU on the domain $G_n$. Since $G_n$ is a metric ball, it satisfies CDC (\Cref{lem:IU_derivation}), which allows us to apply the IU bound. Let $\lambda_1 = \lambda_1(G_n)$. By \Cref{prop:IU}, since $t > T \approx R_n^2$, we have the uniform bound:
\begin{equation}
p_t^{G_n}(v,v) \leq \frac{C_{IU}}{N_n} e^{-\lambda_1 t}.
\end{equation}

We bound the tail sum $S = \sum_{t > T} p_t^{G_n}(v,v)$. This is a geometric series with ratio $r:=e^{-\lambda_1}$.
\[
S \leq \frac{C_{IU}}{N_n} \sum_{t=T+1}^\infty r^t = \frac{C_{IU}}{N_n} \frac{r^{T+1}}{1-r}.
\]

We use the Faber-Krahn inequality (\Cref{prop:IU}), which holds for metric balls under VG(2)+PI: $\lambda_1 \geq c_{FK}/R_n^2$. Since $T+1 > R_n^2$, the numerator is $r^{T+1} \leq \exp(-\lambda_1(T+1)) \leq e^{-c_{FK}}$. Since $\lambda_1 \to 0$ as $n \to \infty$, we use the approximation $1-r \approx \lambda_1$ (more precisely, $1-r \geq \lambda_1/2$ for large $n$).

The tail sum is bounded by
\[
S \leq \frac{C_{IU}}{N_n} \frac{e^{-c_{FK}}}{\lambda_1/2} = \frac{2 C_{IU} e^{-c_{FK}}}{N_n \lambda_1}.
\]
Since $N_n \asymp R_n^2$ and $\lambda_1 \gtrsim 1/R_n^2$, the term $N_n \lambda_1$ is bounded below by a positive constant $c''>0$. Thus, $S = O(1)$ uniformly in $v$.

Combining the estimates yields the desired pointwise bound $G_{G_n}(v,v) \leq 2\cG \log R_n + O(1)$.
\end{proof}

\begin{remark}[On the sharpness of the upper bound and the role of IU]\label{rem:IU_necessity}
The use of Intrinsic Ultracontractivity (IU) in Part 2 is crucial for obtaining the sharp $O(1)$ remainder in the pointwise bound. IU provides the spatial homogenisation factor $1/N_n$. If we were to use only the standard operator-norm bound $p_t^{G_n}(v,v) \le \|P_{G_n}^t\|_{\infty \to \infty} \le e^{-\lambda_1 t}$ (which holds without assuming IU/CDC), the tail sum would be $\sum_{t>T} e^{-\lambda_1 t} \approx 1/\lambda_1 \asymp R_n^2$. To make this tail $O(1)$, we would need to choose a much longer time scale $T \gtrsim R_n^2 \log R_n$. This would increase the short-time sum (Part 1) by $\cG \log(R_n^2 \log R_n) - \cG \log(R_n^2) = \cG \log\log R_n$. This extraneous $\log\log$ term would violate the sharp $O(N_n)$ remainder in the main theorem. We avoid this $\log\log$-loss by leveraging the fact that VG(2)+PI implies IU for balls (as rigorously justified in \Cref{lem:IU_derivation}).
\end{remark}

We summarise the contributions of the different terms in our analysis of $Z_n(1)$ in Table~\ref{tab:error_ledger}.

\begin{table}[h!]
\centering
\small
\caption{Error ledger summarising contributions to $Z_n(1)$. Here $\kappa=\eta/2>0$; only the first row is order $N_n\log N_n$.}
\label{tab:error_ledger}
\resizebox{\linewidth}{!}{%
\begin{tabular}{@{}lllll@{}}\toprule
Source & Mechanism & Count / scale & Order & Absorbed? \\ \midrule
Interior main term & $\sum_{t\le T_n} p_t(x,x)=2\cG(1-2\eta)\log R_n+O(1)$ & $N_n$ & $\cG N_n \log N_n$ & No \\
Interior fluctuation & Summable $t^{-1-\delta}$ error & $N_n$ & $O(N_n)$ & Yes \\
Boundary (early) & Exit prob. subtraction & $|E_n|=O(N_n^{1-\kappa})$ & $O(N_n^{1-\kappa}\log N_n)=o(N_n)$ & Yes \\
Boundary (late) & IU exponential tail & $|E_n|$ & $O(N_n^{1-\kappa})$ & Yes \\
Long-time tail & IU + FK & $N_n$ & $O(N_n)$ & Yes \\
Weaker (QH-lite) dev. & Harmonic $(\log t)^{-\alpha}$ & $N_n$ & $O(N_n)$ / $O(N_n\log\log N_n)$ & Yes (if $\alpha>1$) \\
\bottomrule
\end{tabular}}
\end{table}
\noindent\emph{Note.} Since $\kappa>0$, $(N_n^{1-\kappa}\log N_n)/N_n\to0$; all non-main rows are $O(N_n)$ or smaller.

\begin{corollary}\label{cor:upper}
There exists $C_4 > 0$ such that
\[
Z_n(1) \leq \cG N_n \log N_n + C_4 N_n.
\]
\end{corollary}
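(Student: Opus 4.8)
\textbf{Proof proposal for Corollary~\ref{cor:upper}.}
The plan is to sum the pointwise bound of \Cref{lem:upper} over all $N_n$ vertices of $G_n$ and then convert the spatial scale $R_n$ into the volume $N_n$ using the quadratic growth hypothesis. This is a direct aggregation argument: unlike the lower bound, no interior/boundary split is needed, because \Cref{lem:upper} already delivers the bound $G_{G_n}(v,v)\le 2\cG\log R_n + C_3$ \emph{uniformly} in $v\in V(G_n)$, boundary vertices included (this uniformity is exactly what the IU step in \Cref{lem:upper} buys us). Thus I would first write $Z_n(1)=\sum_{v\in V(G_n)}G_{G_n}(v,v)\le N_n\bigl(2\cG\log R_n + C_3\bigr)$.

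Next I would invoke VG(2), which gives $N_n\asymp R_n^2$ and hence, on taking logarithms, $\log N_n = 2\log R_n + O(1)$; equivalently $2\log R_n = \log N_n + O(1)$. Substituting this into the previous display yields
\[
  Z_n(1)\;\le\; N_n\bigl(\cG\log N_n + \cG\,O(1) + C_3\bigr)\;=\;\cG N_n\log N_n + C_4 N_n,
\]
where $C_4$ collects $\cG$ times the $O(1)$ discrepancy between $2\log R_n$ and $\log N_n$ together with $C_3$; since all of these constants depend only on the structural data $(\mathrm{VG(2)},\mathrm{PI},\Delta,\mathrm{QH})$ and on $\eta$, so does $C_4$, and it is independent of $n$. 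That completes the argument.

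There is essentially no obstacle here: the corollary is a one-line consequence of \Cref{lem:upper} plus the scaling relation $\log N_n = 2\log R_n + O(1)$. The only point worth a sentence of care is that the $O(1)$ in $\log N_n = 2\log R_n + O(1)$ is genuinely uniform in $n$ — it comes from the two-sided constants $c_1,c_2$ in \eqref{eq:quad} via $\log c_1 + 2\log R_n \le \log N_n \le \log c_2 + 2\log R_n$ (valid once $R_n\ge1$, hence for all large $n$) — so that $C_4$ can indeed be taken $n$-independent. All the genuine work (the short-time LCLT sum, and the IU/Faber--Krahn control of the long-time tail that makes the pointwise bound hold at boundary vertices too) has already been done in \Cref{lem:upper}; the corollary merely multiplies by $N_n$. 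Together with \Cref{cor:lower}, letting $\eta\downarrow0$, this will pin down the leading constant as $\cG$ and yield \Cref{thm:main}.
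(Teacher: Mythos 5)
Your proposal is correct and follows exactly the same route as the paper: sum the uniform pointwise bound from \Cref{lem:upper} over all $v\in V(G_n)$ and convert $2\log R_n$ to $\log N_n + O(1)$ via the two-sided VG(2) volume bounds. The only cosmetic difference is that the paper substitutes the scaling relation into the pointwise bound before summing, while you sum first; the extra sentence you add about the $O(1)$ discrepancy being uniform via $c_1, c_2$ in \eqref{eq:quad} is a worthwhile clarification but not a new idea.
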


\begin{proof}
As established in \Cref{cor:lower}, the quadratic growth $N_n \asymp R_n^2$ implies $2\log R_n = \log N_n + O(1)$. Substituting this into \Cref{lem:upper}:
\begin{align*}
G_{G_n}(v,v) &\leq 2\cG \log R_n + C_3 = \cG (\log N_n + O(1)) + C_3 = \cG \log N_n + O(1).
\end{align*}
Summing this uniform bound over all $v \in V(G_n)$ gives the result.
\end{proof}

\section{Proof of the Main Theorem and Further Discussion}\label{sec:discussion}

\begin{proof}[Proof of \Cref{thm:main}]
We combine the lower and upper bounds to establish the limit and the $O(N_n)$ remainder.

Let $\eta \in (0,1/2)$ be arbitrary. By \Cref{cor:lower}, the lower asymptotic bound is:
\begin{align*}
\liminf_{n \to \infty} \frac{Z_n(1)}{N_n \log N_n} &\geq \lim_{n \to \infty} \left( \cG(1-2\eta) - \frac{C_2}{\log N_n} \right) = \cG(1-2\eta).
\end{align*}

By \Cref{cor:upper}, the upper asymptotic bound is:
\begin{align*}
\limsup_{n \to \infty} \frac{Z_n(1)}{N_n \log N_n} &\leq \lim_{n \to \infty} \left( \cG + \frac{C_4}{\log N_n} \right) = \cG.
\end{align*}

Combining the two bounds we obtain
\[
\cG(1-2\eta) \leq \liminf_{n \to \infty} \frac{Z_n(1)}{N_n \log N_n} \leq \limsup_{n \to \infty} \frac{Z_n(1)}{N_n \log N_n} \leq \cG.
\]
Since $\eta > 0$ can be chosen arbitrarily small, we conclude that $\lim_{n \to \infty} Z_n(1) / (N_n \log N_n) = \cG$.

\smallskip
\noindent Finally, for any fixed $\eta\in(0,1/2)$ the pointwise bounds in \Cref{cor:lower} and \Cref{cor:upper} imply
$Z_n(1)=\cG N_n \log N_n + O_{\eta}(N_n)$. Since the implicit constants depend only on the structural parameters of $G$ (and not on $n$), this yields the asserted global remainder $O(N_n)$.
\end{proof}

\begin{remark}[Tauberian perspective]
An alternative (spectral) approach is to study the Stieltjes transform of the empirical spectral measure of $\LL_{G_n}$ and apply Tauberian theorems to the heat-trace or resolvent. Our time-domain method is chosen because it isolates three inputs---QH, FK/PI, and CDC/IU---whose roles are transparent and reusable beyond homogeneous settings.
\end{remark}

\paragraph{Future directions.} Several extensions appear tractable: (i) \emph{Higher-order expansion} extracting an $O(N_n)$ constant term via refined potential-theoretic analysis of the boundary layer; (ii) \emph{Random media with weaker mixing}, replacing (QH) by multi-scale renormalisation to tolerate slowly varying corrections; (iii) \emph{Non-ball domains} (general Følner sequences satisfying a uniform CDC) and quasi-isometry invariance; (iv) \emph{Weighted / degenerate conductances} allowing polynomial tails with adapted heat kernel technology; (v) \emph{Continuous counterparts} on manifolds with quadratic volume growth and controlled injectivity radius, comparing with 2D Liouville quantum gravity analogues; (vi) \emph{Stochastic fluctuations} of $Z_n(1)$ in random environments (variance asymptotics / CLT); (vii) \emph{Algorithmic approximation} using stochastic trace estimators at near-linear cost exploiting sparsity and IU.

\smallskip
\noindent\emph{Second-order term.} On homogeneous lattices, the $O(N_n)$ constant is classically related to lattice Green-function constants; identifying and interpreting the second-order term in irregular media remains an interesting open problem.

\subsection{Discussion on Assumptions and Scope}

\begin{remark}[The role of metric balls and general exhaustions]\label{rem:general_exhaustions}
The assumption that $\{G_n\}$ consists of metric balls is used in specific, crucial ways. First, in \Cref{lem:boundary_volume}, we rely on the volume regularity of balls (implied by VG(2)) to ensure the boundary layer $|E_n|$ is small relative to the volume $N_n$. Second, and most critically, in the upper bound (\Cref{lem:upper}), the application of Intrinsic Ultracontractivity (\Cref{prop:IU}) relies on the domains satisfying the CDC (\Cref{lem:IU_derivation}); metric balls satisfy this requirement under VD+PI. Furthermore, the Faber-Krahn inequality (\Cref{prop:IU}) is used to control the spectral gap.

For a general exhaustion $\{H_n\}$ with volume $N_n$ and effective radius $R_n \asymp N_n^{1/2}$ (assuming $d=2$ scaling), the proof strategy remains valid provided the following criteria are met uniformly:
\begin{enumerate}
    \item \textbf{Geometric control:} Small boundary layers (e.g., $|E_n|/N_n \to 0$, satisfied by Følner sequences) and comparability of inner/outer radii (isodiametric condition).
    \item \textbf{Spectral gap control (FK):} A uniform isoperimetric profile ensuring $\lambda_1(H_n) \gtrsim R_n^{-2}$.
    \item \textbf{Domain regularity (CDC/IU):} The domains must satisfy CDC uniformly to ensure IU holds.
\end{enumerate}
If CDC is violated (e.g., highly irregular boundaries or 'rooms and corridors'), IU may fail, potentially altering the asymptotics or introducing $\log\log$ terms as discussed in \Cref{rem:IU_necessity}. Domains like squares in $\mathbb{Z}^2$ satisfy these conditions.
\end{remark}

\begin{remark}[Necessity of the Poincaré Inequality and QH]
The Poincaré inequality is essential for the robust analytic framework (PHI, IU) used in the proof. PI ensures homogenisation across scales, preventing bottlenecks. Furthermore, the quantitative homogenisation assumption (\Cref{ass:QH}) is crucial for controlling the error terms in the summation of the heat kernel (see \Cref{rem:weaker_QH}).

If PI is dropped, the graph may drastically alter the random walk behaviour and the spectrum, even if VG(2) holds, potentially invalidating the sharp asymptotic involving a global constant $\cG$.
\end{remark}

\begin{remark}[Relaxing Bounded Degree and Weighted Graphs]\label{rem:weighted}
The assumption of bounded degree ($\Delta < \infty$) ensures the comparability of the counting measure and the degree measure (\Cref{rem:measures_operators}). The main theorem naturally extends to the setting of weighted graphs (variable conductances) provided the weights are uniformly elliptic ($0 < c_1 \leq w_{xy} \leq c_2 < \infty$).

In the weighted case, the analysis must be performed with respect to the speed measure $m(x)=\sum_{y} w_{xy}$. The assumptions VG(2) and PI must be defined relative to this measure $m$. For instance, VG(2) becomes $c_1 R^2 \leq m(B_R(x)) \leq c_2 R^2$, and PI is defined using the corresponding weighted Dirichlet form. The key analytic tools (PHI, IU) remain valid in this framework (see \cite{Delmotte99}), and the proof extends straightforwardly.
\end{remark}

\begin{remark}[Continuous vs. Discrete Time]\label{rem:continuous_time}
The analysis is performed for the discrete-time LSRW generated by $\LL$. The result translates directly to the continuous-time random walk (CTRW) generated by the same Laplacian $\LL$. The continuous-time heat kernel $h_t(x,y)$ satisfies $h_t(x,x) \sim \cG/t$ as $t\to\infty$ with the same constant $\cG$. The spectral zeta function $Z_n(1)$ depends only on the eigenvalues of $\LL_n$, so the choice of discrete vs. continuous time does not affect $Z_n(1)$.
\end{remark}

\begin{remark}[Non-lazy random walks and the effect of laziness]\label{rem:non-lazy}
While the proof is presented for the LSRW (laziness parameter $1/2$), the result holds for the standard simple random walk (SRW) or other laziness parameters. Adding laziness scales the time evolution and the generator. If $\LL_{\text{SRW}} = I - P_{\text{SRW}}$, the lazy walk generator is $\LL_{\alpha} = (1-\alpha)\LL_{\text{SRW}}$, where $\alpha$ is the probability of staying put (here $\alpha=1/2$).

This time scaling means the effective diffusivity (covariance matrix $\Sigma$) is scaled by $(1-\alpha)$, i.e., $\Sigma_{\alpha} = (1-\alpha)\Sigma_{\text{SRW}}$. Consequently, the heat-kernel constant $\cG$ is scaled by $1/(1-\alpha)$ (in $d=2$).

For example, on $\mathbb{Z}^2$, the SRW has $\Sigma_{\text{SRW}} = \frac{1}{2} I_2$, leading to $\cG_{\text{SRW}} = 1/\pi$. The LSRW analyzed in \Cref{rem:pi2} ($\alpha=1/2$) has $\Sigma_{\text{LSRW}} = \frac{1}{4} I_2$, leading to $\cG_{\text{LSRW}} = 2/\pi$. Thus, $\cG_{\text{LSRW}}=2\cG_{\text{SRW}}$, as predicted.
\end{remark}

\appendix

\section{Appendix: Comparison with Other Growth Regimes}\label{app:growth}
The quadratic volume growth assumption (effective dimension $d=2$) is critical for the $N_n \log N_n$ behaviour, as summarized in \Cref{tab:growth}. We briefly sketch the arguments for other dimensions.

\begin{itemize}
\item \textbf{Linear growth ($d=1$):} E.g., $\mathbb{Z}$. $N_n \asymp R_n$. $p_t(x,x) \sim C t^{-1/2}$. The characteristic time scale is $T \asymp R_n^2 \asymp N_n^2$. The Green function for interior vertices is $G_{G_n}(x,x) \approx \sum_{t=1}^{T} t^{-1/2} \asymp T^{1/2} \asymp N_n$.

More precisely, for a path (interval) of length $R$, the Green function satisfies $G(x,x) \asymp \min\{x, R-x\}$ (the expected time spent at $x$ before exiting either end). Summing this over $x=1,\dots,R$ yields $\sum_x G(x,x) \asymp R^2 \asymp N_n^2$.

\item \textbf{Polynomial growth ($d>2$):} E.g., $\mathbb{Z}^d, d\geq 3$. $p_t(x,x) \sim C t^{-d/2}$. Since $d/2 > 1$, the walk is transient. The full Green function $G(x,x) = \sum_{t=0}^{\infty} p_t(x,x)$ converges to a finite value. The Dirichlet Green function $G_{G_n}(x,x)$ is uniformly bounded by $G(x,x)$. Thus, $Z_n(1) = \sum_{v \in G_n} G_{G_n}(v,v) \asymp N_n$.
\end{itemize}

\section{Appendix: Additional $\pi$-identities from alternative periodic walks}
\label{app:alt_pi}

\paragraph{Applicability to Squares.}
As discussed in \Cref{rem:general_exhaustions}, the main theorem extends to domains satisfying the required geometric and analytic properties. We work on the squares $V_R=\{1,\dots,R\}^2 \subset \mathbb{Z}^2$ here for convenience. Vertices are ordered lexicographically.
\medskip

The theorem applies to \emph{any} irreducible, uniformly elliptic,
$\mathbb{Z}^{2}$-periodic random walk with finite second moments.
The heat-kernel constant is
$\displaystyle\cG=\bigl(2\pi\sqrt{\det\Sigma}\bigr)^{-1}$.

\begin{remark}[Conventions and Laziness]\label{rem:laziness_appendix}
The examples below consider standard (non-lazy, $\alpha=0$) walks. Introducing laziness (e.g., $\alpha=1/2$) scales $\Sigma$ by $1/2$ and $\cG$ by $2$ (see \Cref{rem:non-lazy}).
\end{remark}

Let $ \mathcal{L}_R $ be the Dirichlet generator (Laplacian $I-P_R$) for the walk inside $V_R$.
By \Cref{thm:main} (adapted for $N_R=R^2$),
\[
\lim_{R\to\infty} \frac{\tr(\mathcal{L}_R^{-1})}{R^2 \log R^2} = \cG.
\]
Rearranging yields a $\pi$-identity:
\[\pi = \frac{1}{2\sqrt{\det\Sigma}}\;\lim_{R\to\infty}\frac{R^{2}\log R^{2}}{\tr\!\bigl(\mathcal{L}_R^{-1}\bigr)}.\]

We visualize the structure of the specific matrices $\mathcal{L}_R$ below, showing the $R^2 \times R^2$ matrix partitioned into $R \times R$ blocks.

\subsection{King walk (8 neighbours)}\label{app:king}

The walk steps to $(\pm1,0),(0,\pm1),(\pm1,\pm1)$ with equal probability $ \tfrac18 $.

The step covariance matrix is $\Sigma = \tfrac{3}{4} I$. The heat kernel constant is $\cG = \frac{2}{3\pi}$.

The Dirichlet Laplacian $\mathcal{L}_R^{\text{King}}$ has a block-tridiagonal structure. Visualized below (empty entries are 0):
\[
\mathcal{L}_R^{\text{King}} =
\left(\begin{array}{@{}cccc|cccc|c@{}}
1 & -\frac{1}{8} & & & -\frac{1}{8} & -\frac{1}{8} & & & \\
-\frac{1}{8} & 1 & \ddots & & -\frac{1}{8} & -\frac{1}{8} & \ddots & & \\
& \ddots & \ddots & -\frac{1}{8} & & \ddots & \ddots & -\frac{1}{8} & \\
& & -\frac{1}{8} & 1 & & & -\frac{1}{8} & -\frac{1}{8} & \\
\hline
-\frac{1}{8} & -\frac{1}{8} & & & 1 & -\frac{1}{8} & & & \cdots \\
-\frac{1}{8} & -\frac{1}{8} & \ddots & & -\frac{1}{8} & 1 & \ddots & & \cdots \\
& \ddots & \ddots & -\frac{1}{8} & & \ddots & \ddots & -\frac{1}{8} & \\
& & -\frac{1}{8} & -\frac{1}{8} & & & -\frac{1}{8} & 1 & \\
\hline
& & & \vdots & & \vdots & & & \ddots
\end{array}\right)_{R^2 \times R^2}
\]

The corresponding $\pi$-identity is:
\begin{equation}\label{eq:King_pi}
\boxed{\;\displaystyle \pi=\frac{2}{3}\;\lim_{R\to\infty}\frac{R^{2}\log R^{2}}{\tr\!\bigl((\mathcal{L}_R^{\text{King}})^{-1}\bigr)}\;}
\end{equation}

\subsection{Triangular walk (6 neighbours)}\label{app:tri}

This walk moves to $(1,0),(0,1),(-1,1),(-1,0),(0,-1),(1,-1)$, each with probability $ \tfrac16 $.

The covariance matrix is $\Sigma=\frac{1}{6}\left(\begin{smallmatrix}4&-2\\-2&4\end{smallmatrix}\right)$, $\det\Sigma = \frac{1}{3}$. The heat kernel constant is $\cG = \frac{\sqrt{3}}{2\pi}$.

The Dirichlet Laplacian $\mathcal{L}_R^{\text{Triangular}}$ is visualized below. Note the asymmetry in the off-diagonal blocks:
\[
\mathcal{L}_R^{\text{Triangular}} =
\left(\begin{array}{@{}cccc|cccc|c@{}}
1 & -\frac{1}{6} & & & -\frac{1}{6} & & & & \\
-\frac{1}{6} & 1 & \ddots & & -\frac{1}{6} & -\frac{1}{6} & & & \\
& \ddots & \ddots & -\frac{1}{6} & & \ddots & \ddots & & \\
& & -\frac{1}{6} & 1 & & & -\frac{1}{6} & & \\
\hline
-\frac{1}{6} & -\frac{1}{6} & & & 1 & -\frac{1}{6} & & & \cdots \\
& -\frac{1}{6} & \ddots & & -\frac{1}{6} & 1 & \ddots & & \cdots \\
& & \ddots & -\frac{1}{6} & & \ddots & \ddots & -\frac{1}{6} & \\
& & & -\frac{1}{6} & & & -\frac{1}{6} & 1 & \\
\hline
& & & \vdots & & \vdots & & & \ddots
\end{array}\right)_{R^2 \times R^2}
\]

The resulting identity is:
\begin{equation}\label{eq:Tri_pi}
\boxed{\;\displaystyle \pi=\frac{\sqrt{3}}{2}\;\lim_{R\to\infty}\frac{R^{2}\log R^{2}}{\tr\!\bigl((\mathcal{L}_R^{\text{Triangular}})^{-1}\bigr)}\;}
\end{equation}

\subsection{Knight walk (8 L-moves)}\label{app:knight}

This walk uses moves $(\pm2,\pm1),(\pm1,\pm2)$, each with probability $ \tfrac{1}{8} $.

The step covariance is $\Sigma = \tfrac{5}{2} I$. The heat kernel constant is $\cG = \frac{1}{5\pi}$.

The Dirichlet Laplacian $\mathcal{L}_R^{\text{Knight}}$ has a block-pentadiagonal structure. Visualized below (showing $3\times 3$ blocks of size $4\times 4$ for illustration, assuming $R\ge 4$):
\[
\mathcal{L}_R^{\text{Knight}} =
\left(\begin{array}{@{}cccc|cccc|cccc|c@{}}
1 & & & & & & -\frac{1}{8} & & & -\frac{1}{8} & & & \\
& 1 & & & & & & -\frac{1}{8} & -\frac{1}{8} & & -\frac{1}{8} & & \\
& & 1 & & -\frac{1}{8} & & & & & -\frac{1}{8} & & -\frac{1}{8} & \\
& & & 1 & & -\frac{1}{8} & & & & & -\frac{1}{8} & & \\
\hline
& & -\frac{1}{8} & & 1 & & & & & & -\frac{1}{8} & & \cdots \\
& & & -\frac{1}{8} & & 1 & & & & & & -\frac{1}{8} & \cdots \\
-\frac{1}{8} & & & & & & 1 & & -\frac{1}{8} & & & & \cdots \\
& -\frac{1}{8} & & & & & & 1 & & -\frac{1}{8} & & & \cdots \\
\hline
& -\frac{1}{8} & & & & & -\frac{1}{8} & & 1 & & & & \\
-\frac{1}{8} & & -\frac{1}{8} & & & & & -\frac{1}{8} & & 1 & & & \\
& -\frac{1}{8} & & -\frac{1}{8} & -\frac{1}{8} & & & & & & 1 & & \\
& & -\frac{1}{8} & & & -\frac{1}{8} & & & & & & 1 & \\
\hline
& & & \vdots & & \vdots & & & & \vdots & & & \ddots
\end{array}\right)
\]

The $\pi$-identity becomes:
\begin{equation}\label{eq:Knight_pi}
\boxed{\;\displaystyle \pi=\frac{1}{5}\;\lim_{R\to\infty}\frac{R^{2}\log R^{2}}{\tr\!\bigl((\mathcal{L}_R^{\text{Knight}})^{-1}\bigr)}\;}
\end{equation}

\subsection{Numerical verification}\label{app:numerical}

\begin{table}[h]
\centering
\caption{Convergence of the three limits ($\pi\approx3.14159265$). Results restricted to sizes where dense diagonalization is feasible.}
\label{tab:numeric_pi}
\begin{tabular}{@{}lccc@{}}
\toprule
Walk & $R$ & Approx.\ value & Abs.\ error \\
\midrule
King & 100 & 3.11197 & $3.0 \times 10^{-2}$ \\
\addlinespace
Triangular & 120 & 3.12629 & $1.5\times10^{-2}$ \\
\addlinespace
Knight & 120 & 3.13482 & $6.8\times10^{-3}$ \\
\bottomrule
\end{tabular}
\end{table}

The numerical results in \Cref{tab:numeric_pi} (where "Approx. value" is the RHS of the boxed identities evaluated at the given $R$) are consistent with the theoretical predictions. The computations were performed using Python (\textsc{NumPy}/\textsc{SciPy}). We constructed the Dirichlet Laplacian matrices $\mathcal{L}_R$ exactly as defined in the preceding sections for the respective walks. The trace of the inverse was computed via dense diagonalization (\texttt{scipy.linalg.eigh}).

\paragraph{Large-scale computation.} For larger domains ($R \gg 10^2$), dense diagonalization is infeasible. Stochastic trace estimators (e.g., Hutchinson's method) combined with efficient sparse linear solvers (e.g., Conjugate Gradient) can be used to approximate $\tr(\mathcal{L}_R^{-1})$.

\end{document}